\newtheorem{theorem}{Theorem}
\newtheorem{problem}[theorem]{Problem}
\newtheorem{lemma}[theorem]{Lemma}
\newtheorem{corollary}[theorem]{Corollary}
\newtheorem{proposition}[theorem]{Proposition}
\def\A{\mathbf{A}}
\def\x{\mathbf{x}}
\def\0{\mathbf{0}}
\newcommand{\RR}{\mathbb{R}}
\newcommand{\ZZ}{\mathbb{Z}}
\begin{document}
\lstset{language=Python}          

\title{Existence of regular nut graphs and the Fowler Construction.}
\author{John Baptist Gauci, Toma\v{z} Pisanski and Irene Sciriha}
\maketitle
\begin{abstract}
In this paper the problem of the existence of regular nut graphs is addressed. A generalization of Fowler's Construction which is a local enlargement applied to a vertex in a graph is introduced to generate nut graphs of higher order. Let $N(\rho)$ denote the set of integers $n$ such that there exists a regular nut graph of degree $\rho$ and order $n$. It is proven that $N(3) = \{12\} \cup \{2k : k \geq 9\}$ and that
$N(4) = \{8,10,12\} \cup \{n: n \geq 14\}$. The problem of determining $N(\rho)$ for $\rho > 4$ remains completely open.

\end{abstract}

\bigskip

\noindent{\bf Keywords}: singular matrix, nullity, nut graph, core graph, Fowler Construction.

\bigskip

\noindent{\bf MSC:} 15B34,15B99,15A03,15A18,05C50.

\section{Introduction} \label{Intro}

All the graphs considered in this paper are simple (that is, without loops or multiple edges). The 0--1 \textit{adjacency matrix} $\A=\A(G)=(a_{ij})$ of a labelled graph $G$ on $n$ vertices is a real and symmetric $n\times n$ matrix such that $a_{ij}=1$ if there is an edge between the vertices $i$ and $j$, and $a_{ij}=0$ otherwise. A graph $G$ is \textit{singular} if zero is an eigenvalue of $\A(G)$, and the multiplicity of the eigenvalue zero in the spectrum of $G$ is the \textit{nullity} of $G$, denoted by $\eta=\eta(G)$. Thus $G$ is singular if and only if $\eta(G)>0$. The \textit{eigenvectors} $\x$ of $\A(G)$ associated with the eigenvalue $\lambda$ are the nonzero vectors determined by $\A\x=\lambda\x$.  The vectors in the nullspace $\ker({\A})$ of $\A$ are called \textit{kernel eigenvectors}.

In $G$, the fact that $\A\x = \0$ can be interpreted as an assignment of the entries of $\x$ to the vertices of $G$, that is, $\x:V(G) \rightarrow \RR$, such that the sum of the values assigned to the neighbours of $v$ sums up to 0 for each $v \in V(G)$, and at least one vertex $v \in V(G)$ is assigned a non-zero value $\x(v) \neq 0$. It is worth noting that for any integer matrix $\A$ having integer eigenvalue $\lambda$, there exists an associated eigenvector $\x$ with integer entries. Moreover, one may choose the entries of $\x$ to have no nontrivial common divisor so that $\x$ is determined up to the sign. In general, any eigenvector $\x$ corresponding to the 0 eigenvalue with  non--zero entries will be called \emph{admissible}.

A vertex of $G$ corresponding to a non--zero entry in some kernel eigenvector is a \textit{core vertex} of $G$. A \textit{core graph} is a singular graph each of whose vertices is a core vertex, whereas a \textit{nut graph} is a core graph of nullity one (introduced in \cite{GutmanSciriha-MaxSing}).  We remark that since in a core graph each vertex is a core vertex, then there is a kernel eigenvector with all entries being non--zero. Thus, there is a very natural interpretation of core graphs, usually referred to in the literature as the \emph{zero sum rule} \cite{GraoGutTrin1977, GutmanSciriha2001}. A graph $G$ is a core graph if there exists an assignment of values from $\ZZ \setminus \{0\}$ such that the sum of the values on the neighbourhood of any vertex $v$ adds up to 0. Using our terminology, we can state the following proposition.

\begin{proposition}
A graph $G$ is a core graph if and only if its adjacency matrix has an admissable eigenvector.
\end{proposition}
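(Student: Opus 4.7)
The statement is essentially a bookkeeping/linear-algebra observation: it says the property ``the support of a \emph{single} kernel eigenvector is all of $V(G)$'' is equivalent to the property ``the union of supports of kernel eigenvectors is all of $V(G)$.'' My plan is to give a short proof of the two implications, with the non-trivial direction handled by a generic linear combination argument.

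First I would dispense with the easy direction. If $G$ has an admissible kernel eigenvector $\mathbf{x}$, then by definition every entry $\mathbf{x}(v)$ is non-zero, so every $v \in V(G)$ is a core vertex; since $\mathbf{x} \in \ker(\A)$ is non-zero, $G$ is singular. Hence $G$ is a core graph.

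For the converse, I would assume $G$ is a core graph, so for every vertex $v \in V(G)$ there exists a kernel eigenvector $\mathbf{x}_v \in \ker(\A)$ with $\mathbf{x}_v(v) \neq 0$. The plan is now to combine these finitely many kernel eigenvectors into a single one with \emph{all} entries non-zero. Consider the family of linear combinations
\[
\mathbf{y}(c) = \sum_{v \in V(G)} c_v \, \mathbf{x}_v, \qquad c = (c_v)_{v \in V(G)} \in \RR^{|V(G)|}.
\]
Each $\mathbf{y}(c)$ lies in $\ker(\A)$ because $\ker(\A)$ is a vector subspace. For a fixed vertex $u \in V(G)$, the $u$-th entry $\mathbf{y}(c)(u) = \sum_v c_v \, \mathbf{x}_v(u)$ is a linear form in $c$, and it is not identically zero because the coefficient of $c_u$ is $\mathbf{x}_u(u) \neq 0$. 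Hence the set $H_u = \{c : \mathbf{y}(c)(u) = 0\}$ is a proper hyperplane in $\RR^{|V(G)|}$. A finite union of proper hyperplanes cannot cover $\RR^{|V(G)|}$, so there exists $c^\star$ avoiding all $H_u$; the vector $\mathbf{y}(c^\star)$ is then a kernel eigenvector with every entry non-zero, i.e.\ an admissible eigenvector.

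There is no real obstacle here; the one place to be slightly careful is the non-vanishing of the linear form associated with each vertex, which is exactly where the hypothesis ``every vertex is a core vertex'' is used. If one wants an explicit choice, picking coefficients from a sufficiently generic set (e.g.\ $c_v = M^v$ for large enough integer $M$, or rational coefficients rescaled to integers) yields an integer admissible eigenvector, matching the integer-entry convention discussed just before the proposition.
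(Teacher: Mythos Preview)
Your argument is correct. The easy direction is immediate, and for the converse your generic--linear--combination / hyperplane--avoidance argument is the standard and cleanest way to produce a single kernel eigenvector with full support from the per-vertex kernel eigenvectors $\mathbf{x}_v$.

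As for comparison: the paper does not actually supply a proof of this proposition. It is stated as a reformulation of the remark made just before it (``since in a core graph each vertex is a core vertex, then there is a kernel eigenvector with all entries being non-zero''), and that remark is simply asserted. So your proposal is not a different route from the paper's proof; it is the missing proof. The one thing you might tighten is the final sentence: the coefficients $c_v = M^v$ do indeed work, but the argument you sketch there (``sufficiently generic'') is really just a restatement of the hyperplane avoidance you already carried out, so you could drop it or make the integrality remark a one-line afterthought (clear denominators in any rational $c^\star$).
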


There are exactly three nut graphs on 7 vertices (shown in Figure \ref{Fig-Nut7}) and none of smaller order \cite{ScirihaGutman-NutExt}. The first two are planar; the third one is toroidal. There are 13 nut graphs on 8 vertices. One of them, the antiprism graph $A_4$, is shown in Figure \ref{Fig-Nut8}. It is quartic (or 4-regular). One of the nine smallest cubic nut graphs  is one of the two asymmetric cubic graphs on 12 vertices and is the well-known Frucht graph, shown in Figure \ref{Fig-Frucht}.

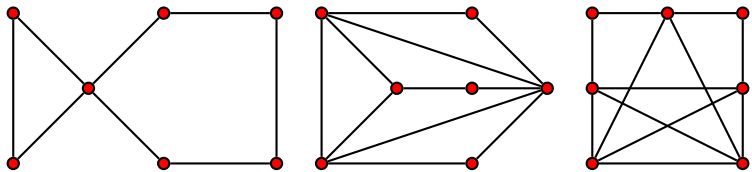
\begin{figure}[h]
\begin{center}
\begin{tabular}{ccc}
\begin{tikzpicture}[scale=1]
\tikzstyle{every path}=[draw, thick]
\tikzstyle{every node}=[draw, circle, fill=red, inner sep=1.5pt]
\node (v_1) at (0,1) {};
\node (v_2) at (1.5,1) {};
\node (v_3) at (1.5,-1) {};
\node (v_4) at (0,-1) {};
\node (v_5) at (-1,0) {};
\node (v_6) at (-2,-1) {};
\node (v_7) at (-2,1) {};
\path (v_1) -- (v_2);
\path (v_1) -- (v_5);
\path (v_2) -- (v_3);
\path (v_3) -- (v_4);
\path (v_4) -- (v_5);
\path (v_6) -- (v_5);
\path (v_6) -- (v_7);
\path (v_7) -- (v_5);
\end{tikzpicture}

&

\begin{tikzpicture}[scale=1]
\tikzstyle{every path}=[draw, thick]
\tikzstyle{every node}=[draw, circle, fill=red, inner sep=1.5pt]
\node (v_1) at (0,0) {};
\node (v_2) at (1,0) {};
\node (v_3) at (2,0) {};
\node (v_4) at (1,-1) {};
\node (v_5) at (-1,-1) {};
\node (v_6) at (-1,1) {};
\node (v_7) at (1,1) {};
\path (v_1) -- (v_2);
\path (v_1) -- (v_5);
\path (v_1) -- (v_6);
\path (v_2) -- (v_3);
\path (v_3) -- (v_4);
\path (v_3) -- (v_5);
\path (v_3) -- (v_6);
\path (v_3) -- (v_7);
\path (v_4) -- (v_5);
\path (v_5) -- (v_6);
\path (v_6) -- (v_7);
\end{tikzpicture}

&

\begin{tikzpicture}[scale=1]
\tikzstyle{every path}=[draw, thick]
\tikzstyle{every node}=[draw, circle, fill=red, inner sep=1.5pt]
\node (v_1) at (0,1) {};
\node (v_2) at (1,1) {};
\node (v_3) at (1,0) {};
\node (v_4) at (1,-1) {};
\node (v_5) at (-1,-1) {};
\node (v_6) at (-1,0) {};
\node (v_7) at (-1,1) {};
\path (v_1) -- (v_2);
\path (v_1) -- (v_4);
\path (v_1) -- (v_5);
\path (v_1) -- (v_7);
\path (v_2) -- (v_3);
\path (v_3) -- (v_4);
\path (v_3) -- (v_5);
\path (v_3) -- (v_6);
\path (v_4) -- (v_5);
\path (v_4) -- (v_6);
\path (v_5) -- (v_6);
\path (v_6) -- (v_7);
\end{tikzpicture}
\end{tabular}
\end{center}
\caption{The three nut graphs on 7 vertices} \label{Fig-Nut7}
\end{figure}

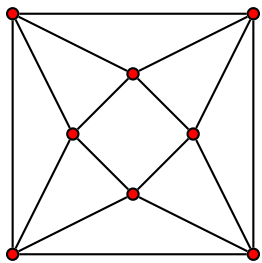
\begin{figure}[h]
\begin{center}
\begin{tikzpicture}[scale=0.8]
\tikzstyle{every path}=[draw, thick]
\tikzstyle{every node}=[draw, circle, fill=red, inner sep=1.5pt]
\node (v_1) at (1,0) {};
\node (v_2) at (0,-1) {};
\node (v_3) at (-1,0) {};
\node (v_4) at (0,1) {};
\node (v_5) at (2,2) {};
\node (v_6) at (2,-2) {};
\node (v_7) at (-2,-2) {};
\node (v_8) at (-2,2) {};
\path (v_1) -- (v_2);
\path (v_1) -- (v_4);
\path (v_1) -- (v_5);
\path (v_1) -- (v_6);
\path (v_2) -- (v_3);
\path (v_2) -- (v_6);
\path (v_2) -- (v_7);
\path (v_3) -- (v_4);
\path (v_3) -- (v_7);
\path (v_3) -- (v_8);
\path (v_4) -- (v_5);
\path (v_4) -- (v_8);
\path (v_5) -- (v_6);
\path (v_5) -- (v_8);
\path (v_6) -- (v_7);
\path (v_7) -- (v_8);
\end{tikzpicture}
\caption{The antiprism graph $A_4$ on 8 vertices is the smallest quartic nut graph.} \label{Fig-Nut8}
\end{center}
\end{figure}

\begin{figure}[h]
\begin{center}
\begin{tikzpicture}[scale=0.7]
\tikzstyle{every path}=[draw, thick]
\tikzstyle{every node}=[draw, circle, fill=red, inner sep=1.5pt]
\node (v_1) at (0,0) {};
\node (v_2) at (1.5,0) {};
\node (v_3) at (3,0) {};
\node (v_4) at (2,-3) {};
\node (v_5) at (-2,-3) {};
\node (v_6) at (-3,0) {};
\node (v_7) at (-2,2) {};
\node (v_8) at (0,3) {};
\node (v_9) at (2,2) {};
\node (v_10) at (0,1.5) {};
\node (v_11) at (-1.5,0) {};
\node (v_12) at (0,-1.5) {};
\path (v_1) -- (v_2);
\path (v_1) -- (v_10);
\path (v_1) -- (v_11);
\path (v_2) -- (v_3);
\path (v_2) -- (v_9);
\path (v_3) -- (v_4);
\path (v_3) -- (v_9);
\path (v_4) -- (v_5);
\path (v_4) -- (v_12);
\path (v_5) -- (v_6);
\path (v_5) -- (v_12);
\path (v_6) -- (v_7);
\path (v_6) -- (v_11);
\path (v_7) -- (v_8);
\path (v_7) -- (v_10);
\path (v_8) -- (v_9);
\path (v_8) -- (v_10);
\path (v_11) -- (v_12);
\end{tikzpicture}
\caption{The Frucht graph, one of the nine smallest cubic nut graphs on $12$ vertices.} \label{Fig-Frucht}
\end{center}
\end{figure}
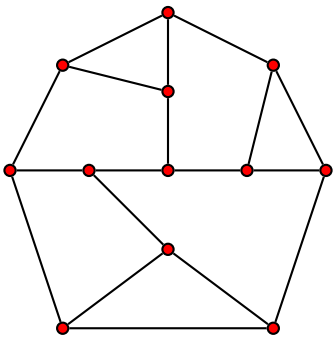

A recent computer search carried out by Coolsaet, Fowler and Goedgebeur  \cite{CoolFowlGoed-2017} shows that there exist no cubic nut graphs on 14 and 16 vertices. However, they discovered cubic nut graphs on 20, 22, 26 and 28 vertices.

The two principal results of this note can be stated as follows.

\begin{theorem} \label{CubicNut}
Cubic nut graphs on $n$ vertices exist if and only if $n$ is an even integer, $n \geq 12$ and $n \notin \{14,16\}$.
\end{theorem}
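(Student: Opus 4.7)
The theorem separates naturally into necessity and sufficiency. Necessity is immediate for odd $n$ by the handshake lemma applied to a cubic graph. The small orders $n\in\{6,8,10\}$ either support no nut graph at all (since the smallest nut graphs have $7$ vertices, as recalled in the excerpt) or can be eliminated by a finite case analysis on the connected cubic graphs of these orders using the zero-sum rule; and the exclusion of $n\in\{14,16\}$ is precisely the computer result of Coolsaet, Fowler and Goedgebeur cited above.

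For sufficiency, my plan is to combine a short list of explicit base examples with the Fowler Construction announced in the abstract. The base graphs I would use are the Frucht graph on $12$ vertices (Figure~\ref{Fig-Frucht}) together with the Coolsaet--Fowler--Goedgebeur cubic nut graphs on $20$ and $22$ vertices. The decisive technical ingredient, to be proved as a lemma before the theorem itself, asserts that the Fowler Construction, applied to any vertex of a cubic nut graph on $n$ vertices, produces a cubic nut graph on $n+6$ vertices. Granted this lemma, iterating the construction from the three base cases delivers cubic nut graphs on every order in the union of the arithmetic progressions $\{12+6k\}$, $\{20+6k\}$ and $\{22+6k\}$, $k\ge 0$, which is exactly $\{12\}\cup\{2k:k\ge 9\}$. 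The excluded orders $14$ and $16$ lie below the starting points of the progressions covering their residue classes modulo~$6$, which explains why the theorem is sharp precisely there.

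The main obstacle is the key lemma. Preservation of cubicity reduces to degree bookkeeping on the local gadget. Preservation of the nut property, however, requires two separate checks: first, that an admissible kernel eigenvector of the original graph~$G$ extends to an admissible kernel eigenvector of the enlarged graph, in particular that the six new entries forced on the gadget by the zero-sum rule are all non-zero; and second, that the new graph still has nullity exactly one, so no independent kernel vector is introduced. I would attack both by writing the zero-sum equations on the gadget, solving the six new unknowns uniquely as linear functions of the values at the three neighbours of the enlarged vertex, and showing that restriction to the old vertices is a linear isomorphism between the two kernels. The non-vanishing of the six new entries would then follow from a genericity statement about the admissible eigenvector of~$G$, which by definition of a nut graph has no zero entries to begin with.
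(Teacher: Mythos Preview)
Your proposal is correct and follows essentially the same route as the paper: both use the Fowler Construction (shown first to preserve cubicity and the nut property) iterated from seed cubic nut graphs of orders $12$, $20$ and $22$ to cover the three residue classes modulo $6$, with non-existence for $n<12$ and $n\in\{14,16\}$ established by computer search. Your plan for the key lemma via a direct linear isomorphism of kernels is a mild streamlining of the paper's argument, which proves the same fact by combining the admissible-vector correspondence with a separate rank computation on the block-decomposed adjacency matrix.
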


\begin{theorem} \label{QuarticNut}
Quartic nut graphs on $n$ vertices exist if and only if $n=8,10,12$ or $n\geq 14$.
\end{theorem}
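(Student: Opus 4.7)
The plan is to split the theorem into its necessity and sufficiency parts. For necessity we must exclude orders $n\le 7$ and $n\in\{9,11,13\}$. The cases $n\le 7$ are immediate: by \cite{ScirihaGutman-NutExt} no nut graph has order less than $7$, and direct inspection of the three graphs in Figure \ref{Fig-Nut7} shows that none of them is $4$-regular. For $n\in\{9,11,13\}$, the natural route is an exhaustive computer enumeration of all connected quartic graphs of these orders, verifying that none attains nullity exactly one with every vertex a core vertex. This part is finite, somewhat unsatisfying, but unavoidable in the style already adopted for Theorem \ref{CubicNut}.

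For sufficiency I would follow the paradigm used in the cubic case: assemble a small collection of explicit base examples and then bridge to every larger order by iterated application of the generalized Fowler construction introduced earlier in the paper. The antiprism $A_4$ of Figure \ref{Fig-Nut8} supplies the case $n=8$, and one exhibits direct constructions for $n=10$ and $n=12$, together with a representative quartic nut graph for each residue class modulo the increment $k$ of the Fowler construction, above some threshold (in particular for $n=14$ and for any additional small orders needed to cover every residue). Verifying that each of these explicit examples is indeed a quartic nut graph is a finite computation: one writes down an admissible kernel eigenvector, checks the zero-sum rule at each vertex, and confirms that the corank of $\mathbf{A}$ equals one.

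The main obstacle is the inductive step: showing that the local vertex enlargement actually returns a quartic nut graph from a quartic nut graph. One must argue that (i) the gadget preserves $4$-regularity at the enlarged vertex and at every vertex newly adjacent to the gadget; (ii) the unique (up to scalar) kernel eigenvector $\mathbf{x}$ of the original graph extends, uniquely, to a kernel eigenvector of the enlarged graph, so that the nullity remains one; and (iii) the extended eigenvector assigns non-zero values to every newly introduced vertex, so that the core property is preserved at the new vertices as well as the old ones. The extension in (ii)--(iii) will ultimately rest on a small linear system imposed by the gadget, whose solution must be non-vanishing on the new vertices whenever the value at the enlarged vertex is non-zero; this is where the design of the construction has to be matched precisely to the constraint of quarticity. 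A secondary, purely bookkeeping, obstacle is making sure the finitely many base cases together with the increment $k$ cover every residue class modulo $k$, so that iteration yields a quartic nut graph for every $n\ge 14$.
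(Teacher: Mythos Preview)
Your plan is correct and matches the paper's approach almost exactly: exclude $n\le 7$ and $n\in\{9,11,13\}$ by the known minimum order and exhaustive computer search, then seed the Fowler construction (whose increment here is $2\rho=8$) with explicit quartic nut graphs of orders $8,10,12,14$ for the even case and $15,17,19,21$ for the odd case. Note that the ``main obstacle'' you describe---items (i)--(iii) on regularity, preservation of nullity one, and non-vanishing of the extended eigenvector---is already fully discharged earlier in the paper (Proposition~4, Theorem~6, Theorem~7 and its corollary), so by the time you reach this theorem only the bookkeeping of base cases remains; the paper supplies the even seeds via the antiprisms $A_4,A_5,A_7$ (Proposition~\ref{QuarticAntiPrism}) plus one sporadic graph on $12$ vertices, and the odd seeds by four explicit examples.
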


These two results will be proved in Section \ref{ExistRegNuts}. In the proof of Theorem \ref{CubicNut} we use an ``construction'' so that starting from a graph $G$ on $n$ vertices with a vertex $v$ of degree 3 we can generate a graph $F(G,v)$ on $n+6$ vertices. Since this construction is due to Patrick Fowler  \cite{Irene-Coalesced,ScirihaFowler-Fullerenes}, we call it the {\it Fowler Construction}. Actually, in  Section \ref{FowlerExt}, we generalize it to vertices of arbitrary degree $\rho$ in such a way that the eigenvector entries corresponding to $V(G)\setminus \{v\}$ are preserved. In general, the graph is extended by $2\rho$ vertices. The main result of  Section \ref{FowlerExt} is that the nullity of $F(G,v)$ is the same as that of $G$. A direct consequence of this result that will enable us to construct an infinite family of nut graphs is that $F(G,v)$ preserves the property that $G$ is a nut graph.

In  Section \ref{ExistRegNuts}, we first prove the existence of cubic and quartic nut graphs. We proceed to determine the forbidden orders of cubic nut graphs. Using the Fowler Construction, we prove that there is an infinite family of quartic nut graphs. The construction of this family shows that quartic nut graphs  exist for all orders at least 14.


\section{The Fowler Construction} \label{FowlerExt}

Let $G$ be a graph, and $v$ any of its vertices of degree $\rho$, with neighbours labelled $u_1,u_2, \ldots, u_{\rho}$. We change $G$ to produce $F(G,v)$
as follows. Add $2\rho$ vertices $p_1, p_2, \ldots, p_{\rho}, q_1, q_2, \ldots, q_{\rho}$. For each $i\in\{1,\ldots,\rho\}$, remove all edges between $v$ and $u_i$ and insert edges between $v$ and $q_i$ and between each $p_i$ and $u_i$. Finally, add all edges from $p_i$ to $q_j$ for any pair $i,j\in\{1,\ldots,\rho\}$, $i \neq j$.  The resulting graph $F(G,v)$ is called \emph{the Fowler Construction} of $G$ at $v$. This process is depicted in Figure \ref{Fig-FowlerExt}.

\begin{figure}[h]
\begin{center}
\begin{tabular}{ >{\centering\arraybackslash} m{4.5cm} >{\centering\arraybackslash} m{1cm} >{\centering\arraybackslash} m{5.5cm} }
\begin{tikzpicture}[scale=0.9, every edge/.style = {draw, thick},
                    vertex/.style args = {#1 #2}{circle,
                                                draw, fill=red, inner sep=1.5pt,
                                                label=#1:#2}]
\path	node(v_1) [vertex=above $v$] at (0,0) {}
	node(u_1) [vertex=left $u_1$] at (-2,-1) {}
	node(u_2) [vertex=left $u_2$] at (-1,-1) {}
	node(u_rho) [vertex=right $u_\rho$] at (2,-1) {}
	(v_1) edge (u_1)
	(v_1) edge (u_2)
	(v_1) edge (u_rho);

 \path (u_1) -- node[auto=false]{\ldots} (u_rho);
\end{tikzpicture}

&

$\Rightarrow$

&

\begin{tikzpicture}[scale=0.9, every edge/.style = {draw, thick},
                    vertex/.style args = {#1 #2}{circle,
                                                draw, fill=red, inner sep=1.5pt,
                                                label=#1:#2}]
\path	node(v_1) [vertex=above $v$] at (0,0) {}
	node(q_1) [vertex=left $q_1$] at (-2,-1) {}
	node(q_2) [vertex=left $q_2$] at (-1,-1) {}
	node(q_rho) [vertex=right $q_\rho$] at (2,-1) {}
	node(p_1) [vertex=left $p_1$] at (-2,-2) {}
	node(p_2) [vertex=left $p_2$] at (-1,-2) {}
	node(p_rho) [vertex=right $p_\rho$] at (2,-2) {}
	node(u_1) [vertex=left $u_1$] at (-2,-3) {}
	node(u_2) [vertex=left $u_2$] at (-1,-3) {}
	node(u_rho) [vertex=right $u_\rho$] at (2,-3) {}
	(v_1) edge (q_1)
	(v_1) edge (q_2)
	(v_1) edge (q_rho)
	(q_1) edge (p_2)
	(q_1) edge (p_rho)
	(q_2) edge (p_1)
	(q_2) edge (p_rho)
	(q_rho) edge (p_1)
	(q_rho) edge (p_2)
	(p_1) edge (u_1)
	(p_2) edge (u_2)
	(p_rho) edge (u_rho);

 \path (q_1) -- node[auto=false]{\ldots} (q_rho);
 \path (p_1) -- node[auto=false]{\ldots} (p_rho);
 \path (u_1) -- node[auto=false]{\ldots} (u_rho);
\end{tikzpicture}

\\

\small{$G$} & & \small{$F(G,v)$}

\end{tabular}
\end{center}
\vspace{-15pt}\caption{The Fowler Construction.} \label{Fig-FowlerExt}
\end{figure}

\begin{proposition}
If $G$ is a $\rho$--regular graph then any of its Fowler constructions $F(G,v)$ is also $\rho$-valent.
\end{proposition}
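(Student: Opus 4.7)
The plan is to verify that $\rho$-regularity is preserved by checking the degree of every vertex in $F(G,v)$, sorted into five natural classes: the distinguished vertex $v$, its former neighbours $u_1,\ldots,u_\rho$, the remaining vertices of $V(G)\setminus(\{v\}\cup\{u_1,\ldots,u_\rho\})$, the new ``inner'' vertices $q_1,\ldots,q_\rho$, and the new ``outer'' vertices $p_1,\ldots,p_\rho$. Since each class is handled by a single calculation, the proof is essentially a bookkeeping exercise.

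I would first observe that the only vertices of $G$ whose incident edges are touched by the construction are $v$ and the $u_i$'s. For $v$, the construction deletes the $\rho$ edges $vu_i$ and adds the $\rho$ edges $vq_i$, so its degree stays $\rho$. For each $u_i$, the construction deletes the single edge $vu_i$ and adds the single edge $p_iu_i$, so its degree is unchanged and remains $\rho$ by the regularity hypothesis. Every other vertex of $G$ has all of its incidences untouched, hence still has degree $\rho$.

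For the new vertices I would use the definition of the edge set directly. Each $q_i$ is joined to $v$ and to every $p_j$ with $j\neq i$, giving degree $1+(\rho-1)=\rho$. Symmetrically, each $p_i$ is joined to $u_i$ and to every $q_j$ with $j\neq i$, again giving degree $1+(\rho-1)=\rho$. Collecting the five cases shows that every vertex of $F(G,v)$ has degree $\rho$, so $F(G,v)$ is $\rho$-regular.

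There is no real obstacle here; the only point worth flagging is the ``complete bipartite minus a perfect matching'' structure on $\{p_i\}\cup\{q_j\}$, which is precisely what makes the degrees of the new vertices come out to $\rho$ after the single extra edge to $v$ or $u_i$ is added. One should also note in passing that $\rho\ge 1$ is implicit (the statement is vacuous if $v$ were isolated, which $\rho$-regularity with $\rho\ge 1$ prevents).
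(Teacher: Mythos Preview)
Your proof is correct; the paper states this proposition without proof, treating it as immediate from the definition of the Fowler Construction, and your five-class degree check is exactly the routine verification one would supply if pressed.
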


A simple but important lemma for the work that follows is given hereunder.

\begin{lemma} \label{EquivLabeling}
Let $G$ be a core graph, $\x$ an admissible eigenvector and $u$ and $v$ any two non-adjacent vertices.
Let $N(u,v)$ denote the set of vertices adjacent to both $u$ and $v$, $N(u-v)$ the vertices adjacent to $u$ but not to $v$,
and $N(v-u)$ the set of vertices adjacent to $v$ but not to $u$.  If $\{u'\} = N(u-v)$ and $\{v'\} = N(v-u)$ then
$\x(u') = \x(v')$.
\end{lemma}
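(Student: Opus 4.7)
The plan is to exploit the zero sum rule (which holds because $\x$ is an admissible kernel eigenvector of $\A(G)$) applied at each of the two vertices $u$ and $v$, and then to subtract the two resulting equations.

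First I would observe that, since $u$ and $v$ are non-adjacent, the neighbourhood of $u$ partitions as $N(u) = N(u,v) \sqcup N(u-v) = N(u,v) \sqcup \{u'\}$, and analogously $N(v) = N(u,v) \sqcup \{v'\}$. The zero sum rule at $u$ then gives
\[
\x(u') + \sum_{w \in N(u,v)} \x(w) = 0,
\]
and the zero sum rule at $v$ gives
\[
\x(v') + \sum_{w \in N(u,v)} \x(w) = 0.
\]
Subtracting the second equation from the first cancels the common sum over $N(u,v)$ and yields $\x(u') - \x(v') = 0$, which is exactly the desired equality $\x(u') = \x(v')$.

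There is essentially no obstacle here: the only thing to be careful about is the use of non-adjacency of $u$ and $v$, which is precisely what ensures that $v \notin N(u)$ and $u \notin N(v)$ so that the neighbourhoods split cleanly into the common part $N(u,v)$ together with the single ``exclusive'' neighbour. The admissibility (all entries nonzero) of $\x$ is not actually needed for this argument; any kernel eigenvector would suffice, but stating the lemma for an admissible $\x$ is natural for the later applications.
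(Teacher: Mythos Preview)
Your proof is correct and follows exactly the same approach as the paper: apply the zero sum rule at $u$ and at $v$, note that the contribution from the common neighbourhood $N(u,v)$ is identical in both equations, and conclude that the singleton remainders $\x(u')$ and $\x(v')$ must coincide. Your version is simply a more explicit write-up of the paper's two-sentence argument.
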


\begin{proof}
Both  sums over the neighbours of $v$ and $u$ are 0, and the sum in the intersection is the same, hence the remainders have to be the same. If both remainders are singletons, their values must be equal.
\end{proof}

Now we are ready to prove the key theorem.

\begin{theorem}
A graph $G$ is a core graph if and only if its Fowler Construction $F(G,v)$ is a core graph, where $v$ is a vertex in $G$. Moreover, $G$ and $F(G,v)$ have equal nullities.
\end{theorem}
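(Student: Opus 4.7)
The plan is to construct an explicit linear bijection $\Phi:\ker\A(G)\to\ker\A(F(G,v))$ sending admissible eigenvectors to admissible eigenvectors; this single construction yields both equality of nullities and the core-graph equivalence at once. Throughout one may assume $\rho\ge 2$, since a vertex of degree at most $1$ cannot belong to a core graph (the zero-sum rule at a pendant forces its unique neighbour to zero).

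Forward direction: given $\x\in\ker\A(G)$, I would define $\y=\Phi(\x)$ on $F(G,v)$ by $\y(w)=\x(w)$ for $w\in V(G)\setminus\{v\}$, $\y(v)=-(\rho-1)\x(v)$, $\y(p_i)=\x(v)$, and $\y(q_i)=\x(u_i)$ for each $i$. The zero-sum rule for $\y$ must then be checked at every vertex of $F(G,v)$: at any $w\in V(G)\setminus(\{v\}\cup N(v))$ the condition is literally inherited from $G$; at $u_i$ the contribution $\x(v)$ previously supplied by $v$ is now supplied by $\y(p_i)=\x(v)$; at $v$ the sum $\sum_i\y(q_i)=\sum_i\x(u_i)$ vanishes by the original zero-sum at $v$ in $G$; at $p_i$ the sum $\y(u_i)+\sum_{j\ne i}\y(q_j)=\sum_j\x(u_j)$ again vanishes; and at $q_i$ the sum is $\y(v)+(\rho-1)\x(v)=0$ by construction.

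Inverse direction: I would exploit Lemma \ref{EquivLabeling}, whose proof uses only the zero-sum rule and so applies to any kernel vector. The pairs $(q_i,q_j)$ with $i\ne j$ are non-adjacent and their neighbourhoods differ exactly in the singletons $\{p_j\}$ versus $\{p_i\}$, which forces $\y(p_1)=\cdots=\y(p_\rho)=:a$. The pair $(v,p_i)$ is non-adjacent with neighbourhoods differing only in $\{q_i\}$ versus $\{u_i\}$, giving $\y(q_i)=\y(u_i)$. Setting $\x(w)=\y(w)$ for $w\ne v$ and $\x(v)=a$, the zero-sum rule for $\x$ on $G$ then transfers cleanly: at $u_i$ the identity $\x(v)+\sum_{w\in N_G(u_i)\setminus\{v\}}\x(w)=0$ matches the condition at $u_i$ in $F(G,v)$ since $\y(p_i)=a$; at $v$ the sum $\sum_i\x(u_i)=\sum_i\y(q_i)$ vanishes by the condition at $v$ in $F(G,v)$; and at all remaining vertices the condition is unchanged.

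The two maps are visibly inverse (the reverse map recovers $\x(v)$ as the common value $\y(p_i)$, equivalently as $-\y(v)/(\rho-1)$), so they form a linear isomorphism between the two kernels and the nullities agree. Admissibility transfers entry by entry: every coordinate of $\y$ is a nonzero scalar multiple of a coordinate of $\x$ (the factor $-(\rho-1)$ is nonzero because $\rho\ge 2$), and conversely, so $\y$ is admissible iff $\x$ is, which gives the core-graph equivalence. The main subtlety lies in the inverse direction: one must recognise that the $\rho$ zero-sum conditions at the vertices $q_i$ rigidly collapse all $\y(p_i)$ to a single value, and this is precisely what makes the reconstruction of $\x(v)$ well defined.
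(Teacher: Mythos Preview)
Your proof is correct, and for the core-graph equivalence it follows the paper's argument verbatim: the forward construction $\x\mapsto\y$ is identical, and the converse is established by the same two applications of Lemma~\ref{EquivLabeling} (to the pairs $(q_i,q_j)$ and $(v,p_i)$).

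The genuine difference lies in how nullity equality is obtained. The paper separates this out: after establishing the core-graph equivalence for admissible vectors, it writes $\A(F(G,v))$ in block form, performs simultaneous row/column operations, and uses the fact that $\mathbf{J}-\mathbf{I}$ has full rank to deduce $\operatorname{rk}\A(F(G,v))\ge\operatorname{rk}\A(G)+2\rho$, hence $\eta(F(G,v))\le\eta(G)$; the reverse inequality then comes from the forward map. You instead observe that the proof of Lemma~\ref{EquivLabeling} uses only the zero-sum rule and so applies to \emph{every} kernel vector, not just admissible ones; this lets you run the converse analysis on the whole of $\ker\A(F(G,v))$ and exhibit an explicit linear inverse to $\Phi$, yielding a vector-space isomorphism between the two kernels in one stroke. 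Your route is more unified and avoids the block-matrix bookkeeping entirely; the paper's route, by contrast, makes the rank increment $2\rho$ visible as a structural fact about the matrix. Both require $\rho\ge 2$ (yours because $-(\rho-1)\neq 0$, theirs because $\mathbf{J}-\mathbf{I}$ is singular when $\rho=1$), and you are right to flag this.
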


\begin{proof}
Let $u_1,\ldots, u_{\rho}$ be the neighbours of vertex $v$ in $G$. Assume first that $G$ is a core graph and that $\x$ is an admissible eigenvector. Let $\x(w)$ denote the entry of $\x$ at vertex $w$. Let $a = \x(v)$ and let $b_i = \x(u_i)$.  We now produce a vertex labelling $\x'$ of $F(G,v)$ as follows. Let $\x'(w) = \x(w)$ for any vertex $w \in V(G) \setminus \{v\}$. For all $i$, let $\x'(p_i) = a$, $\x'(q_i) = b_i$ and $\x'(v) = -(\rho-1)a$. If $\A(G)\x=\0$, then $\A(F(G,v))\x'=\0$. Thus, it follows that if $\x$ is a valid assignment of $G$ then $\x'$ is a valid assignment on $F(G,v)$.

Conversely, apply the above  Lemma \ref{EquivLabeling} to $F(G,v)$ and an admissible assignment $\x'$.  First consider vertices $q_i$ and $q_j$ and their neighbourhoods.  Lemma \ref{EquivLabeling} implies that $\x'(p_i) = \x'(p_j)$. Hence $\x'$ is constant on $p_i$, say $\x'(p_i) = t$.  Thus, it follows that $\x'(v) = -(\rho-1)t$. The second application of the lemma goes to vertices $v$ and $p_i$. It implies that for each $i$ the values $\x'(q_i)$ and $\x'(u_i)$ are equal, namely $\x'(q_i) = \x'(u_i)$.  Finally, let $\x(w) = \x'(w)$ for every $w \in V(G) \setminus \{v\}$ and let $\x(v) = t$. Hence, the existence of an admissible $\x'$ on $F(G,v)$ implies the existence of an admissible $\x$ on $G$.

To show that the nullities of $G$ and $F(G,v)$ are equal, let $N_G(v)=\{u_1,\ldots,u_{\rho}\}$ and $V(G)\setminus N[v]=\{w_1,\ldots,w_{n-\rho-1}\}$, where $n=|V(G)|$. The adjacency matrix $\A(G)$ can be partitioned into block matrices as follows:

$$
  \A(G) = \begin{blockarray}{cccccccc}
       v & w_1 & \ldots & w_{n-\rho-1} & v_1 & \ldots & v_{\rho} & \\
   \begin{block}{(c|ccc|ccc)c}
     0 & 0 & \ldots & 0 & 1 & \ldots & 1 & v\\ \cline{1-7}
     0 & \BAmulticolumn{3}{c|}{\multirow{3}{*}{$\bf B$}} & \BAmulticolumn{3}{c}{\multirow{3}{*}{$\bf C$}} &  w_1 \\
    \vdots  &  &  &  &  &  &  & \vdots \\
    0  &  &  &  &  &  &  & w_{n-\rho-1}\\ \cline{1-7}
   1 & \BAmulticolumn{3}{c|}{\multirow{3}{*}{${\bf C}^T$}} & \BAmulticolumn{3}{c}{\multirow{3}{*}{$\bf D$}} &  v_1  \\
   \vdots  &  &  &  &  &  &  & \vdots \\
   1  &  &  &  &  &  &  & v_{\rho}\\
		\end{block}
  \end{blockarray}
$$
where the submatrices $\bf B$, $\bf C$ and $\bf D$ encode the adjacencies between the respective vertex-sets.

The adjacency matrix of $\A(F(G,v))$ can similarly be partitioned as follows:
$$
\begin{blockarray}{cccccccccccccc}
       v & w_1 & \ldots & w_{n-\rho-1} & v_1 & \ldots & v_{\rho} & q_1 & \ldots & q_{\rho} & p_1 & \ldots & p_{\rho}\\
   \begin{block}{(c|ccc|ccc|ccc|ccc)c}
     0 & 0 & \ldots & 0 & 0 & \ldots & 0 & 1 & \ldots & 1 & 0 & \ldots & 0 & v\\ \cline{1-13}
     0 & \BAmulticolumn{3}{c|}{\multirow{3}{*}{$\bf B$}} & \BAmulticolumn{3}{c|}{\multirow{3}{*}{$\bf C$}} & \BAmulticolumn{3}{c|}{\multirow{3}{*}{$\bf 0$}} & \BAmulticolumn{3}{c}{\multirow{3}{*}{$\bf 0$}} &  w_1 \\
    \vdots  &  &  &  &  &  &  & & & & & & &\vdots \\
    0  &  &  &  &  &  &  & & & & & & &  w_{n-\rho-1}\\ \cline{1-13}
   0 & \BAmulticolumn{3}{c|}{\multirow{3}{*}{${\bf C}^T$}} & \BAmulticolumn{3}{c|}{\multirow{3}{*}{$\bf D$}} & \BAmulticolumn{3}{c|}{\multirow{3}{*}{$\bf 0$}} & \BAmulticolumn{3}{c}{\multirow{3}{*}{$\bf I$}} &  v_1  \\
   \vdots  &  &  &  &  &  &  & & & & & & & \vdots \\
   0  &  &  &  &  &  &  & & & & & & & v_{\rho}\\  \cline{1-13}
     1 & \BAmulticolumn{3}{c|}{\multirow{3}{*}{$\bf 0$}} & \BAmulticolumn{3}{c|}{\multirow{3}{*}{$\bf 0$}} & \BAmulticolumn{3}{c|}{\multirow{3}{*}{$\bf 0$}} & \BAmulticolumn{3}{c}{\multirow{3}{*}{$\bf J-I$}} &  q_1 \\
    \vdots  &  &  &  &  &  &  & & & & & & &\vdots \\
    1  &  &  &  &  &  &  & & & & & & &  q_{\rho}\\ \cline{1-13}
     0 & \BAmulticolumn{3}{c|}{\multirow{3}{*}{$\bf 0$}} & \BAmulticolumn{3}{c|}{\multirow{3}{*}{$\bf I$}} & \BAmulticolumn{3}{c|}{\multirow{3}{*}{$\bf J-I$}} & \BAmulticolumn{3}{c}{\multirow{3}{*}{$\bf 0$}} &  p_1 \\
    \vdots  &  &  &  &  &  &  & & & & & & &\vdots \\
    0  &  &  &  &  &  &  & & & & & & &  p_{\rho}\\
		\end{block}
  \end{blockarray}
$$
where $\bf I$ is the identity matrix and $\bf J$ is the all-one matrix.

Elementary row and corresponding column operations which leave the rank unchanged are performed by replacing the rows and columns corresponding to $v_1,\ldots, v_{\rho}$ by $v_1+q_1,\ldots, v_{\rho}+q_{\rho}$, respectively, to obtain the matrix

$$
\begin{blockarray}{cccccccccccccc}
       v & w_1 & \ldots & w_{n-\rho-1} & v_1 & \ldots & v_{\rho} & q_1 & \ldots & q_{\rho} & p_1 & \ldots & p_{\rho}\\
   \begin{block}{(c|ccc|ccc|ccc|ccc)c}
     0 & 0 & \ldots & 0 & 1 & \ldots & 1 & 1 & \ldots & 1 & 0 & \ldots & 0 & v\\ \cline{1-13}
     0 & \BAmulticolumn{3}{c|}{\multirow{3}{*}{$\bf B$}} & \BAmulticolumn{3}{c|}{\multirow{3}{*}{$\bf C$}} & \BAmulticolumn{3}{c|}{\multirow{3}{*}{$\bf 0$}} & \BAmulticolumn{3}{c}{\multirow{3}{*}{$\bf 0$}} &  w_1 \\
    \vdots  &  &  &  &  &  &  & & & & & & &\vdots \\
    0  &  &  &  &  &  &  & & & & & & &  w_{n-\rho-1}\\ \cline{1-13}
    1 & \BAmulticolumn{3}{c|}{\multirow{3}{*}{${\bf C}^T$}} & \BAmulticolumn{3}{c|}{\multirow{3}{*}{$\bf D$}} & \BAmulticolumn{3}{c|}{\multirow{3}{*}{$\bf 0$}} & \BAmulticolumn{3}{c}{\multirow{3}{*}{$\bf J$}} &  v_1  \\
   \vdots  &  &  &  &  &  &  & & & & & & & \vdots \\
   1  &  &  &  &  &  &  & & & & & & & v_{\rho}\\  \cline{1-13}
     1 & \BAmulticolumn{3}{c|}{\multirow{3}{*}{$\bf 0$}} & \BAmulticolumn{3}{c|}{\multirow{3}{*}{$\bf 0$}} & \BAmulticolumn{3}{c|}{\multirow{3}{*}{$\bf 0$}} & \BAmulticolumn{3}{c}{\multirow{3}{*}{$\bf J-I$}} &  q_1 \\
    \vdots  &  &  &  &  &  &  & & & & & & &\vdots \\
    1  &  &  &  &  &  &  & & & & & & &  q_{\rho}\\ \cline{1-13}
     0 & \BAmulticolumn{3}{c|}{\multirow{3}{*}{$\bf 0$}} & \BAmulticolumn{3}{c|}{\multirow{3}{*}{$\bf J$}} & \BAmulticolumn{3}{c|}{\multirow{3}{*}{$\bf J-I$}} & \BAmulticolumn{3}{c}{\multirow{3}{*}{$\bf 0$}} &  p_1 \\
    \vdots  &  &  &  &  &  &  & & & & & & &\vdots \\
    0  &  &  &  &  &  &  & & & & & & &  p_{\rho}\\
		\end{block}
  \end{blockarray}
$$

The rank of $\bf J-I$ is full. Hence,  ${\rm rk}(\A(F(G,v))) \geq {\rm rk}(\A(G))+2\rho=n-1+2\rho$, since the last $2\rho$ rows/columns are linearly independent of all the other rows/columns. Thus, $\eta(F(G,v))\leq \eta(G)$, and by the first part of the proof, we get $\eta(F(G,v))=\eta(G)$.
\end{proof}

The result we need most is a straightforward corollary of the above theorem.

\begin{theorem}
$G$ is a nut graph if and only if its Fowler Construction $F(G,v)$ is a nut graph.
\end{theorem}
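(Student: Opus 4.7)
The plan is to treat this as a direct corollary of the preceding theorem, since that result already packages together both of the ingredients needed. Recall that a nut graph is, by definition, a core graph whose nullity equals one, so the statement ``$G$ is a nut graph'' decomposes into two conditions, and I only need to show that the Fowler Construction preserves each condition separately.

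First I would invoke the ``core graph'' half of the preceding theorem: $G$ is a core graph if and only if $F(G,v)$ is a core graph. Second, I would invoke the ``equal nullities'' half: $\eta(G) = \eta(F(G,v))$. Combining these, $\eta(G) = 1$ if and only if $\eta(F(G,v)) = 1$, and the core-graph property transfers in either direction, so $G$ is simultaneously a core graph of nullity one precisely when $F(G,v)$ is. That is exactly the claim.

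There is essentially no obstacle here; the work was absorbed into the previous theorem. The only thing worth remarking explicitly in the write-up is that the ``nullity one'' condition is not an extra hypothesis that needs to be checked by some separate argument (for instance by producing a specific admissible eigenvector on $F(G,v)$ and arguing uniqueness): the equality $\eta(F(G,v)) = \eta(G)$ already established via the block-matrix rank computation makes the nullity-one property transfer automatically in both directions. So the proof is a one-line deduction from the previous theorem plus the definition of a nut graph.
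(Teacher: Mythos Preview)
Your proposal is correct and matches the paper's approach exactly: the paper states this theorem as ``a straightforward corollary of the above theorem'' and gives no separate proof, relying precisely on the definition of a nut graph as a core graph of nullity one together with the two conclusions (core-graph equivalence and equal nullities) already established.
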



\section{The existence problem for regular nut graphs} \label{ExistRegNuts}

The machinery that we prepared in the previous section will be used in the following result. In the construction of $F(G,v)$, $2\rho$ vertices are added to $G$, where $\rho$ is the degree of vertex $v$ in $G$. Each of the new vertices in $F(G,v)$ acquire the degree $\rho$, while all the other vertices retain the degree in $G$.

\begin{corollary} Let $G$ be a nut graph on $n$ vertices and let $v$ be any of its vertices.  If the degree of $v$ is $\rho$, then there exists a nut graph $G'$ on $n + 2\rho$ vertices. Moreover, if $G$ is regular, then $G'$ is regular.
\end{corollary}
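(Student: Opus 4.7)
The plan is to simply set $G' := F(G,v)$, the Fowler Construction of $G$ at $v$, and then verify each of the three claims (the vertex count, the nut property, and regularity) by reading them off from the construction and from the results already proved in Section~\ref{FowlerExt}.

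First I would note that by definition $F(G,v)$ is obtained from $G$ by adjoining exactly $2\rho$ new vertices $p_1,\ldots,p_\rho,q_1,\ldots,q_\rho$, so that $|V(G')|=n+2\rho$. Next, since $G$ is a nut graph, the theorem immediately preceding the corollary (namely, that $G$ is a nut graph iff $F(G,v)$ is a nut graph) applies and yields that $G'$ is a nut graph. This dispenses with the first sentence of the corollary without any additional work.

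For the moreover clause, I would carry out a direct degree count in $F(G,v)$. Vertex $v$ loses its $\rho$ edges to $u_1,\ldots,u_\rho$ and gains $\rho$ edges to $q_1,\ldots,q_\rho$, so its degree is preserved. Each neighbour $u_i$ loses the single edge to $v$ and gains the single edge to $p_i$, so its degree is also preserved. The remaining vertices of $V(G)\setminus(\{v\}\cup N(v))$ are untouched. Among the new vertices, each $q_i$ is adjacent to $v$ together with all $p_j$ for $j\neq i$, giving degree $1+(\rho-1)=\rho$, and symmetrically each $p_i$ is adjacent to $u_i$ together with all $q_j$ for $j\neq i$, again of degree $\rho$. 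Consequently, if $G$ is $\rho$-regular, then every vertex of $G'$ has degree $\rho$, and $G'$ is $\rho$-regular (this is precisely the content of the earlier Proposition asserting that the Fowler Construction of a $\rho$-regular graph is $\rho$-valent).

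There is no real obstacle in this proof: the heavy lifting has already been done in the preceding theorem, which equates the nullity of $G$ with that of $F(G,v)$ and, in particular, preserves the nut property. The corollary is therefore a direct unpacking of the construction together with that theorem, and the only thing requiring any care is the bookkeeping of degrees that I sketched above.
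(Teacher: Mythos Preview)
Your proposal is correct and follows exactly the approach the paper takes: the corollary is stated immediately after the theorem that $G$ is a nut graph iff $F(G,v)$ is, and the paper merely remarks (in the sentence preceding the corollary) that the construction adds $2\rho$ vertices, each new vertex acquires degree $\rho$, and all old vertices retain their degree. Your degree bookkeeping spells out precisely this, so there is nothing to add.
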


In the sequel, we establish the existence, or otherwise, of regular nut graphs. We shall require the following result.

\begin{lemma} \cite{ScirihaGutman-NutExt} \label{4-foldSubdvn}
Let $G$ be a graph and $e$ any of its edges. Let $G'$ be a graph obtained from $G$ by a 4-fold subdivision of $e$. Then $G$ is a core graph if and
only if $G'$ is a core graph; and $G$ is a nut graph if and only if $G'$ is a nut graph.
\end{lemma}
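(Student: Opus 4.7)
The plan is to use the zero--sum rule interpretation of kernel eigenvectors throughout. Write the 4-fold subdivision of $e=uv$ as replacing this edge by a path $u$--$x_1$--$x_2$--$x_3$--$x_4$--$v$ in $G'$. I would proceed by exhibiting mutually inverse linear maps between $\ker\A(G)$ and $\ker\A(G')$, showing separately that each preserves the admissibility property (i.e., having all nonzero entries). The nullity statement then gives equality of nullities, and combined with preservation of admissibility this gives both the core graph and nut graph equivalences (since a nut graph is precisely a core graph of nullity one).

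For the extension map, given $\x\in\ker\A(G)$ with $a=\x(u)$ and $b=\x(v)$, define $\x'$ on $V(G')$ by $\x'(w)=\x(w)$ for $w\in V(G)$ and set $\x'(x_1)=b$, $\x'(x_2)=-a$, $\x'(x_3)=-b$, $\x'(x_4)=a$. The zero--sum rule at $x_1,x_2,x_3,x_4$ is immediate from these values. At $u$, its $G'$-neighborhood differs from its $G$-neighborhood by swapping $v$ for $x_1$, so the new sum equals the old sum (which is $0$) minus $\x(v)=b$ plus $\x'(x_1)=b$, still $0$; symmetrically at $v$. All remaining vertices retain their $G$-neighborhoods. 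Conversely, given $\x'\in\ker\A(G')$, the zero--sum rule at $x_1,x_2,x_3,x_4$ reads as a second-order recurrence: from $x_1$ we get $\x'(x_2)=-\x'(u)$, from $x_3$ we get $\x'(x_4)=-\x'(x_2)=\x'(u)$, from $x_4$ we get $\x'(x_3)=-\x'(v)$, and from $x_2$ we get $\x'(x_1)=-\x'(x_3)=\x'(v)$. Defining $\x=\x'|_{V(G)}$, the zero--sum rule at $u$ in $G$ becomes the $G'$-sum at $u$ (which is $0$) with $\x'(x_1)=\x'(v)=\x(v)$ replacing the contribution of $v$; hence $\x\in\ker\A(G)$.

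These two maps are linear and visibly inverse to one another, so $\eta(G)=\eta(G')$. Preservation of admissibility is immediate in both directions: the extension formulas place $\pm a,\pm b$ at $x_1,\ldots,x_4$, which are nonzero whenever $\x$ is admissible, while the restriction preserves the values on $V(G)$ and a fortiori their nonvanishing. The main conceptual point—the only place where the number $4$ enters essentially—is that the recurrence $\x'(x_{i+1})=-\x'(x_{i-1})$ has period $4$, so that the boundary values forced from the $u$-side and the $v$-side agree precisely when the internal path has exactly four new vertices. This compatibility is what makes the extension well-defined for \emph{every} kernel eigenvector, rather than only those satisfying an extra linear relation, and is the only nontrivial obstacle in the proof.
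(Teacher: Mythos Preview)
Your argument is correct. The extension and restriction maps you write down are indeed mutually inverse linear isomorphisms between $\ker\A(G)$ and $\ker\A(G')$, the zero--sum verifications at $u,v,x_1,\dots,x_4$ are all accurate, and the admissibility claims go through in both directions exactly as you say (in particular, a nonzero $\x'$ cannot restrict to the zero vector on $V(G)$, since the four internal values are determined by $\x'(u)$ and $\x'(v)$).

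As for comparison: the paper does not actually supply a proof of this lemma; it is quoted from \cite{ScirihaGutman-NutExt}. Your argument, however, is entirely in the spirit of the paper's own treatment of the Fowler Construction (Theorem~7), where one likewise passes back and forth between admissible kernel eigenvectors of $G$ and of the enlarged graph via the zero--sum rule. Your observation that the recurrence $\x'(x_{i+1})=-\x'(x_{i-1})$ has period~$4$, and that this is precisely why a $4$-fold (rather than arbitrary) subdivision works, is the correct structural explanation and matches what the cited source establishes.
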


The case when $\rho=2$ is settled in the following result.

\begin{theorem}
  There are no regular nut graphs of degree two.
\end{theorem}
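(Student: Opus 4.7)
The plan is to reduce the statement to a computation on cycles. Any $2$-regular graph is a vertex-disjoint union of cycles $C_{n_1}, C_{n_2}, \ldots, C_{n_k}$, and the nullity of a disjoint union is the sum of the nullities of the components. So it suffices to show that $\eta(C_n)$ is always an even integer; this would make $\eta(G)$ even for every $2$-regular $G$, ruling out nullity one.

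To compute $\eta(C_n)$, I would use either the standard spectrum of the cycle, namely that the eigenvalues are $2\cos(2\pi k/n)$ for $k = 0, 1, \ldots, n-1$, or the zero-sum rule stated in the introduction. The spectral route immediately shows that $0$ is an eigenvalue precisely when $4 \mid n$, and in that case it occurs with multiplicity exactly two, coming from $k = n/4$ and $k = 3n/4$; otherwise $\eta(C_n) = 0$. Either way $\eta(C_n)$ is even. The zero-sum rule gives a self-contained alternative: labelling the cyclic vertices $v_1, \ldots, v_n$, the kernel condition at $v_i$ reads $\x(v_{i-1}) + \x(v_{i+1}) = 0$, hence $\x(v_{i+2}) = -\x(v_i)$ and $\x(v_{i+4}) = \x(v_i)$. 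Closing up modulo $n$, non-trivial solutions exist only when $4 \mid n$, and then the kernel is spanned by the two independent periodic patterns, giving $\eta(C_n) = 2$.

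Combining these facts, $\eta(G) = \sum_j \eta(C_{n_j})$ is a sum of even numbers, hence even, for every $2$-regular graph $G$. In particular $\eta(G) \neq 1$, so no $2$-regular nut graph can exist. The only minor point to mention is that even if one does not insist that a nut graph be connected, the parity argument still applies, and if one does insist on connectedness then $G = C_n$ for some $n$ and the same cycle computation still gives $\eta(G) \in \{0, 2\}$, never one. There is no real obstacle here beyond the cycle spectrum computation, which is completely routine.
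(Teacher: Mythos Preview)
Your proof is correct, and it takes a somewhat different route from the paper's. The paper verifies directly that $C_3,C_5,C_6$ are non-singular and that $C_4$ is a core graph of nullity greater than one, and then invokes the 4-fold subdivision lemma (Lemma~\ref{4-foldSubdvn}, cited from \cite{ScirihaGutman-NutExt}) together with induction to conclude that $C_n$ is singular if and only if $4\mid n$, in which case it is a core graph but not a nut graph. In particular, the paper argues only about connected $2$-regular graphs, implicitly using that a nut graph must be connected (which indeed follows from the definitions: in a disconnected graph of nullity one, the unique kernel eigenvector vanishes on the nullity-zero components).

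Your argument instead computes the cycle spectrum directly (or, equivalently, solves the local recurrence $\x(v_{i-1})+\x(v_{i+1})=0$) to obtain $\eta(C_n)\in\{0,2\}$, and then observes that the nullity of a disjoint union of cycles is an even sum, hence never $1$. This is more self-contained, since it avoids the external subdivision lemma, and it handles the disconnected case uniformly rather than appealing to the connectedness of nut graphs. The paper's route, on the other hand, is a tidy application of machinery already introduced for other purposes. Both approaches rest on the same underlying fact that $\eta(C_n)=2$ when $4\mid n$ and $0$ otherwise.
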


\begin{proof}
It is easy to check that the cycles $C_3,C_5$ and $C_6$ are non-singular graphs and that $C_4$ is a core graph that is not a nut graph. By Lemma \ref{4-foldSubdvn} and induction, it readily follows that a cycle $C_n$ on $n$ vertices is singular if and only if $n$ is divisible by $4$ and also that any cycle which is singular is a core graph but not a nut graph.
\end{proof}

The case when $\rho=3$ is presented in  Theorem \ref{CubicNut} stated in the introduction, which we can now proceed to prove.

\begin{proof} (Proof of  Theorem \ref{CubicNut}.)
For cubic graphs, $n$ must be even. Non-existence for $n < 12$ and $n \in \{14,16\}$ has been shown by computer search. Since a cubic nut graph exists for $n = 12$, then cubic nut graphs for $n = 12+6 = 18$  and $n = 18+6 = 24$ exist as well.  Using the same argument, the existence of cubic nut graphs of order $n$, for $n$ divisible by $6$ is guaranteed. Using the cubic nut graphs for $n=20$ and for $n=22$ as seeds (shown in Figure \ref{Fig-Nut2628}), the remaining cases are also covered.
\end{proof}

\begin{figure}[h]
\begin{center}
\begin{tabular}{cc}
\begin{tikzpicture}[scale=2.5]
\tikzstyle{every path}=[draw, thick]
\tikzstyle{every node}=[draw, circle, fill=red, inner sep=1.5pt]
\node (v_0) at (1.0,0.0) {};
\node (v_1) at (0.809017, 0.587785) {};
\node (v_2) at (0.309017, 0.951056) {};
\node (v_3) at (-0.309017, 0.951056) {};
\node (v_4) at (-0.809017, 0.587785) {};
\node (v_5) at (-1.0, 0.0) {};
\node (v_6) at (-0.809017, -0.587785) {};
\node (v_7) at (-0.309017, -0.951057) {};
\node (v_8) at (0.309017, -0.951057) {};
\node (v_9) at (0.809017, -0.587785) {};
\node (v_10) at (0.65, 0.0) {};
\node (v_11) at (0.525861, 0.382060) {};
\node (v_12) at (0.200861, 0.618187) {};
\node (v_13) at (-0.200861, 0.618187) {};
\node (v_14) at (-0.525861, 0.382060) {};
\node (v_15) at (-0.65, 0.0) {};
\node (v_16) at (-0.525851, -0.382060) {};
\node (v_17) at (-0.200861, -0.618187) {};
\node (v_18) at (0.200861, -0.6181875) {};
\node (v_19) at (0.525851, -0.382060) {};
\path (v_0) -- (v_1);
\path (v_0) -- (v_9);
\path (v_0) -- (v_10);
\path (v_1) -- (v_2);
\path (v_1) -- (v_11);
\path (v_2) -- (v_3);
\path (v_2) -- (v_12);
\path (v_3) -- (v_4);
\path (v_3) -- (v_13);
\path (v_4) -- (v_5);
\path (v_4) -- (v_14);
\path (v_5) -- (v_6);
\path (v_5) -- (v_15);
\path (v_6) -- (v_7);
\path (v_6) -- (v_16);
\path (v_7) -- (v_8);
\path (v_7) -- (v_17);
\path (v_8) -- (v_9);
\path (v_8) -- (v_18);
\path (v_9) -- (v_19);
\path (v_10) -- (v_12);
\path (v_10) -- (v_14);
\path (v_11) -- (v_17);
\path (v_11) -- (v_19);
\path (v_12) -- (v_16);
\path (v_13) -- (v_15);
\path (v_13) -- (v_17);
\path (v_14) -- (v_18);
\path (v_15) -- (v_19);
\path (v_16) -- (v_18);
\end{tikzpicture}

&

\begin{tikzpicture}[scale=0.45]
\tikzstyle{every path}=[draw, thick]
\tikzstyle{every node}=[draw, circle, fill=red, inner sep=1.5pt]
\node (v_0) at (-5.00000, -5.00000) {};
\node (v_1) at (-3.00000, -5.00000) {};
\node (v_2) at (3.00000, -5.00000) {};
\node (v_3) at (5.00000, -5.00000) {};
\node (v_4) at (-6.00000, 0.00000) {};
\node (v_5) at (-4.00000, 0.00000) {};
\node (v_6) at (-3.00000, 0.00000) {};
\node (v_7) at (-2.00000, 0.00000) {};
\node (v_8) at (-1.00000, 0.00000) {};
\node (v_9) at (1.00000, 0.00000) {};
\node (v_10) at (2.00000, 0.00000) {};
\node (v_11) at (3.00000, 0.00000) {};
\node (v_12) at (4.00000, 0.00000) {};
\node (v_13) at (6.00000, 0.00000) {};
\node (v_14) at (-2.00000, 2.500000) {};
\node (v_15) at (2.00000, 2.50000) {};
\node (v_16) at (-5.00000, 5.00000) {};
\node (v_17) at (-3.00000, 5.00000) {};
\node (v_18) at (-2.00000, 5.00000) {};
\node (v_19) at (2.00000, 5.00000) {};
\node (v_20) at (3.00000, 5.00000) {};
\node (v_21) at (5.00000, 5.00000) {};
\path (v_0) -- (v_1);
\path (v_0) -- (v_4);
\path (v_0) -- (v_5);
\path (v_1) -- (v_2);
\path (v_1) -- (v_6);
\path (v_2) -- (v_3);
\path (v_2) -- (v_11);
\path (v_3) -- (v_12);
\path (v_3) -- (v_13);
\path (v_4) -- (v_5);
\path (v_4) -- (v_16);
\path (v_5) -- (v_16);
\path (v_6) -- (v_7);
\path (v_6) -- (v_17);
\path (v_7) -- (v_8);
\path (v_7) -- (v_14);
\path (v_8) -- (v_9);
\path (v_8) -- (v_14);
\path (v_9) -- (v_10);
\path (v_9) -- (v_15);
\path (v_10) -- (v_11);
\path (v_10) -- (v_15);
\path (v_11) -- (v_20);
\path (v_12) -- (v_13);
\path (v_12) -- (v_21);
\path (v_13) -- (v_21);
\path (v_14) -- (v_18);
\path (v_15) -- (v_19);
\path (v_16) -- (v_17);
\path (v_17) -- (v_18);
\path (v_18) -- (v_19);
\path (v_19) -- (v_20);
\path (v_20) -- (v_21);
\end{tikzpicture}

\vspace{0.5cm}

\\

\begin{tikzpicture}[scale=0.9]
\tikzstyle{every path}=[draw, thick]
\tikzstyle{every node}=[draw, circle, fill=red, inner sep=1.5pt]
\node (v_0) at (-2,2.5) {};
\node (v_1) at (-1.5,2) {};
\node (v_2) at (-2.5,3) {};
\node (v_3) at (-2,-2.5) {};
\node (v_4) at (-1,1.5) {};
\node (v_5) at (1.5,2) {};
\node (v_6) at (2.5,3) {};
\node (v_7) at (-2.5,-3) {};
\node (v_8) at (-1.5,-2) {};
\node (v_9) at (-1,0) {};
\node (v_10) at (0,1.5) {};
\node (v_11) at (1,1.5) {};
\node (v_12) at (2.5,0) {};
\node (v_13) at (2.5,-3) {};
\node (v_14) at (1.5,-2) {};
\node (v_15) at (-1,-1.5) {};
\node (v_16) at (-0.5,0) {};
\node (v_17) at (0,1) {};
\node (v_18) at (1,0) {};
\node (v_19) at (1,-1.5) {};
\node (v_20) at (0,-1.5) {};
\node (v_21) at (-0.5,-0.5) {};
\node (v_22) at (-0.5,0.5) {};
\node (v_23) at (0.5,0.5) {};
\node (v_24) at (0,-1) {};
\node (v_25) at (0.5,-0.5) {};
\path (v_0) -- (v_1);
\path (v_0) -- (v_2);
\path (v_0) -- (v_3);
\path (v_1) -- (v_4);
\path (v_1) -- (v_5);
\path (v_2) -- (v_6);
\path (v_2) -- (v_7);
\path (v_3) -- (v_7);
\path (v_3) -- (v_8);
\path (v_4) -- (v_9);
\path (v_4) -- (v_10);
\path (v_5) -- (v_6);
\path (v_5) -- (v_11);
\path (v_6) -- (v_12);
\path (v_7) -- (v_13);
\path (v_8) -- (v_14);
\path (v_8) -- (v_15);
\path (v_9) -- (v_15);
\path (v_9) -- (v_16);
\path (v_10) -- (v_11);
\path (v_10) -- (v_17);
\path (v_11) -- (v_18);
\path (v_12) -- (v_13);
\path (v_12) -- (v_18);
\path (v_13) -- (v_14);
\path (v_14) -- (v_19);
\path (v_15) -- (v_20);
\path (v_16) -- (v_21);
\path (v_16) -- (v_22);
\path (v_17) -- (v_22);
\path (v_17) -- (v_23);
\path (v_18) -- (v_19);
\path (v_19) -- (v_20);
\path (v_20) -- (v_24);
\path (v_21) -- (v_24);
\path (v_21) -- (v_25);
\path (v_22) -- (v_23);
\path (v_23) -- (v_25);
\path (v_24) -- (v_25);
\end{tikzpicture}

&

\begin{tikzpicture}[scale=0.9]
\tikzstyle{every path}=[draw, thick]
\tikzstyle{every node}=[draw, circle, fill=red, inner sep=1.5pt]
\node (v_0) at (-2,2.5) {};
\node (v_1) at (-1.5,2) {};
\node (v_2) at (-2.5,3) {};
\node (v_3) at (-2,-2.5) {};
\node (v_4) at (-1,1.5) {};
\node (v_5) at (1.5,2) {};
\node (v_6) at (2.5,3) {};
\node (v_7) at (-2.5,-3) {};
\node (v_8) at (-1.5,-2) {};
\node (v_9) at (-1,0) {};
\node (v_10) at (0,1.5) {};
\node (v_11) at (1,1.5) {};
\node (v_12) at (2.5,0) {};
\node (v_13) at (2.5,-3) {};
\node (v_14) at (1.5,-2) {};
\node (v_15) at (-1,-1.5) {};
\node (v_16) at (-0.5,0) {};
\node (v_17) at (0,1) {};
\node (v_18) at (1,0) {};
\node (v_19) at (1,-1.5) {};
\node (v_20) at (0,-1.5) {};
\node (v_21) at (-0.5,-0.5) {};
\node (v_22) at (-0.5,0.5) {};
\node (v_23) at (0.5,0.5) {};
\node (v_24) at (0,-1) {};
\node (v_25) at (0.5,-0.5) {};
\node (v_26) at (0.5,0) {};
\node (v_27) at (0,-0.5) {};
\path (v_0) -- (v_1);
\path (v_0) -- (v_2);
\path (v_0) -- (v_3);
\path (v_1) -- (v_4);
\path (v_1) -- (v_5);
\path (v_2) -- (v_6);
\path (v_2) -- (v_7);
\path (v_3) -- (v_7);
\path (v_3) -- (v_8);
\path (v_4) -- (v_9);
\path (v_4) -- (v_10);
\path (v_5) -- (v_6);
\path (v_5) -- (v_11);
\path (v_6) -- (v_12);
\path (v_7) -- (v_13);
\path (v_8) -- (v_14);
\path (v_8) -- (v_15);
\path (v_9) -- (v_15);
\path (v_9) -- (v_16);
\path (v_10) -- (v_11);
\path (v_10) -- (v_17);
\path (v_11) -- (v_18);
\path (v_12) -- (v_13);
\path (v_12) -- (v_18);
\path (v_13) -- (v_14);
\path (v_14) -- (v_19);
\path (v_15) -- (v_20);
\path (v_16) -- (v_21);
\path (v_16) -- (v_22);
\path (v_17) -- (v_22);
\path (v_17) -- (v_23);
\path (v_18) -- (v_19);
\path (v_19) -- (v_20);
\path (v_20) -- (v_24);
\path (v_21) -- (v_24);
\path (v_21) -- (v_27);
\path (v_22) -- (v_23);
\path (v_23) -- (v_26);
\path (v_24) -- (v_25);
\path (v_25) -- (v_26);
\path (v_25) -- (v_27);
\path (v_26) -- (v_27);
\end{tikzpicture}
\end{tabular}
\end{center}
\vspace{-14pt}\caption{Cubic nut graphs of order 20, 22, 26 and 28.} \label{Fig-Nut2628}
\end{figure}
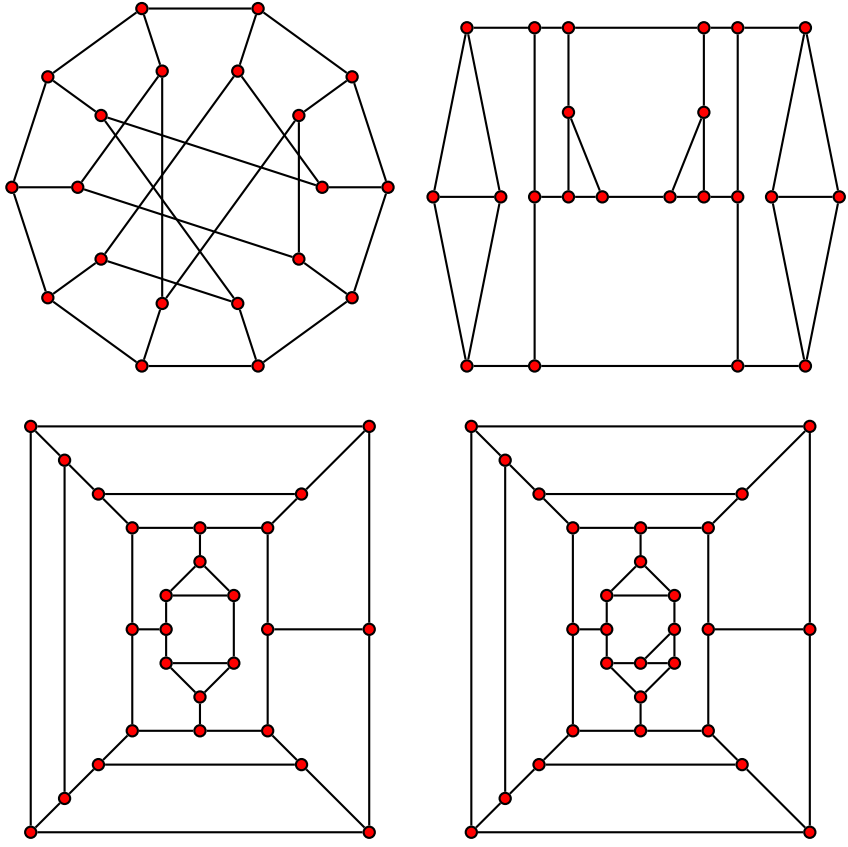

The idea used in the previous proof whereby repeated Fowler constructions are used starting from a seed graph implies the next result.

\begin{proposition}
If there exists a $\rho$-regular nut graph, then there exist infinitely many such graphs.
\end{proposition}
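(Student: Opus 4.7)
The plan is to iterate the Fowler Construction on the given seed graph and use the two already-established facts that (i) Fowler Construction preserves $\rho$-regularity (the proposition at the start of Section \ref{FowlerExt}) and (ii) Fowler Construction preserves the property of being a nut graph (the theorem just before Section \ref{ExistRegNuts}). Each application increases the vertex count by a fixed positive amount $2\rho$, so iteration immediately produces infinitely many graphs of pairwise distinct orders.

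More explicitly, suppose $G_0$ is a $\rho$-regular nut graph on $n_0$ vertices, whose existence is the hypothesis. I would define a sequence recursively: having constructed a $\rho$-regular nut graph $G_k$ on $n_0 + 2k\rho$ vertices, pick any vertex $v_k \in V(G_k)$ (which necessarily has degree $\rho$) and set $G_{k+1} := F(G_k, v_k)$. The proposition on regularity gives that $G_{k+1}$ is $\rho$-regular, and the preservation theorem for the nut property gives that $G_{k+1}$ is again a nut graph. By the vertex count of the Fowler Construction, $|V(G_{k+1})| = |V(G_k)| + 2\rho = n_0 + 2(k+1)\rho$.

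Since $\rho \geq 1$ (a nut graph cannot have isolated vertices, so certainly $\rho > 0$), the orders $n_0, n_0 + 2\rho, n_0 + 4\rho, \ldots$ are strictly increasing, whence the graphs $G_0, G_1, G_2, \ldots$ are pairwise non-isomorphic. This yields an infinite family of $\rho$-regular nut graphs, completing the proof.

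There is no genuine obstacle here, because the substantive work has already been done in Section \ref{FowlerExt}: the only thing left to observe is that iteration is well-defined (every $G_k$ has at least one vertex to which the construction can be applied, which is automatic) and that the resulting orders are distinct.
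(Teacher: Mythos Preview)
Your proof is correct and follows essentially the same approach as the paper: the paper does not give a standalone proof of this proposition but simply remarks that it follows from repeated application of the Fowler Construction to a seed graph, exactly as you have done. Your version is slightly more explicit (defining the sequence $G_k$ and noting that the orders are strictly increasing since $\rho\geq 1$), but the underlying idea is identical.
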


We now shift our attention to the case $\rho=4$. However, before proceeding further, we recall that a {\bf circulant matrix} is an $N\times N$ matrix ${\bf C}=(a_{k,j}: k,j=1,2,\ldots,N)$ where $a_{k,j}=a_{(j-k)\mod N}$, denoted by ${\bf C}=\langle a_0,a_1,\ldots,a_{N-1}\rangle$. The eigenvalues are given by $\lambda_r=\sum_{j=0}^{N-1} a_j \omega^{jr}$, where $r\in\{0,1,\ldots,N-1\}$ and $\omega= \exp{\frac{2\pi i}{N}}$.

\begin{proposition} \label{QuarticAntiPrism}
The quartic antiprism graph $A_n$ of order $2n$ is a nut graph if and only if $n$ is not divisible by 3.
\end{proposition}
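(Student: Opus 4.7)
The plan is to recognise $A_n$ as a circulant graph and then apply the eigenvalue formula stated just before the proposition. First I would identify $A_n$ with the circulant graph $C_{2n}(1,2)$ on vertex set $\{0,1,\ldots,2n-1\}$, where $i\sim j$ iff $(j-i)\bmod 2n\in\{\pm 1,\pm 2\}$. This is justified by relabelling the vertices of the two $n$-gons in alternating fashion along the natural zig-zag Hamilton cycle of the antiprism, so that each edge of $A_n$ becomes a $\pm 1$ or $\pm 2$ step. Consequently $\A(A_n)$ is the circulant $\langle 0,1,1,0,\ldots,0,1,1\rangle$, and the formula recalled above, with $N=2n$ and $\omega=e^{\pi i/n}$, yields
\[
\lambda_r=\omega^r+\omega^{2r}+\omega^{-r}+\omega^{-2r}=2\cos(\pi r/n)+2\cos(2\pi r/n),\qquad r=0,1,\ldots,2n-1.
\]

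Next I would determine when $\lambda_r=0$. Setting $\theta=\pi r/n$ and using $\cos 2\theta=2\cos^2\theta-1$, the equation $\cos\theta+\cos 2\theta=0$ factors as $(2\cos\theta-1)(\cos\theta+1)=0$, so $\cos\theta\in\{1/2,-1\}$ and hence $\theta\in\{\pi/3,\pi,5\pi/3\}$ within $[0,2\pi)$. The corresponding values of $r\in\{0,\ldots,2n-1\}$ are $r=n$ (always an integer), together with $r=n/3$ and $r=5n/3$ precisely when $3\mid n$; these three integers are pairwise distinct and all lie in $\{1,\ldots,2n-1\}$. Therefore the nullity of $A_n$ equals exactly $1$ when $3\nmid n$ and is at least $3$ when $3\mid n$.

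To conclude, when $3\nmid n$ the (essentially unique) kernel eigenvector comes from $r=n$ and has entries $\omega^{nk}=e^{\pi ik}=(-1)^k$, all nonzero; hence it is admissible and, by the proposition characterising core graphs, $A_n$ is a nut graph. When $3\mid n$ the nullity exceeds one, so $A_n$ cannot be a nut graph regardless of admissibility. The spectral calculation itself is routine; the only step that demands genuine care is the initial identification of $A_n$ with $C_{2n}(1,2)$, since that is what licenses the circulant eigenvalue formula and reduces the whole question to solving a single trigonometric equation.
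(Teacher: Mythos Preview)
Your proof is correct and follows essentially the same route as the paper: identify $A_n$ with the circulant $\mathrm{Ci}_{2n}(1,2)$, apply the eigenvalue formula, factor $\cos\theta+\cos 2\theta=(2\cos\theta-1)(\cos\theta+1)$, and read off the nullity. The only cosmetic difference is that the paper, in the $3\mid n$ case, goes on to write down three explicit integer kernel eigenvectors (to exhibit $A_n$ as a core graph that is not a nut graph), whereas you simply observe that nullity $>1$ already precludes being a nut graph; and in the $3\nmid n$ case you obtain the alternating $(-1)^k$ vector directly from the circulant eigenvector for $r=n$, while the paper verifies it by hand.
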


\begin{proof}
The antiprism graph $A_n$ is isomorphic to the circulant graph ${\rm Ci}_{2n}(1,2)$ described by the circulant matrix
$${\bf C}=\langle
\begin{matrix}
0 & 1 & 1 & \0 & 0 & 1 & 1
\end{matrix}
\rangle^T,$$
where $\0$ is the $(2n-6)$--row-vector composed of zero entries.

Thus, for $r\in\{0,1,\ldots,2n-1\}$, $\lambda_r=\omega^r+\omega^{2r}+\omega^{(2n-2)r}+\omega^{(2n-1)r}=\left(\omega^r+\omega^{-r}\right)+\left(\omega^{2r}+\omega^{-2r}\right)=2\left(2\cos{\frac{\pi r}{n}}-1\right)\left(\cos{\frac{\pi r}{n}}+1\right)$. Hence, $\lambda_r=0$ if and only if $\cos{\frac{\pi r}{n}}=\frac{1}{2}$ or $\cos{\frac{\pi r}{n}}=-1$, that is, if and only if $r=\pm \frac{n}{3}+2kn$ or $r=n+2kn$, where $k\in\mathbb{Z}$. Now, if $n$ is divisible by 3, then $\lambda_r=0$ if and only if $r\in\{\frac{n}{3},n,\frac{5n}{3}\}$, whereas if $n$ is not divisible by 3, then $\lambda_r=0$ if and only if $r=n$. Hence in the former case, the nullity of $A_n$ is three, whereas in the latter case, the nullity of $A_n$ is one.

Starting with the case when $n$ is divisible by 3, the vector ${\bf v}_1$ composed of $\frac{n}{3}$ copies of the vector $\begin{pmatrix}0 & 0 & -1 & 0 & 0 &1\end{pmatrix}^T$ is a kernel eigenvector of ${\bf C}$ because in ${\bf C} {\bf v}_1$, each row of ${\bf C}$ captures either exactly two zero entries of ${\bf v}_1$ or the pair $\{-1,1\}$, and thus ${\bf C} {\bf v}_1=\0$. A similar argument can be used on the vector ${\bf v}_2$ composed of $\frac{n}{3}$ copies of the vector $\begin{pmatrix}0 & -1 & 0 & 0 & 1 &0\end{pmatrix}^T$ and on the vector ${\bf v}_3$ composed of $\frac{n}{3}$ copies of the vector $\begin{pmatrix}-1 & 0 & 0 & 1 & 0 &0\end{pmatrix}^T$. Hence ${\bf v}_1$, ${\bf v}_2$ and ${\bf v}_3$ are three linearly independent kernel eigenvectors of ${\bf C}_{2n}$, implying that $A_n$ is core graph but not a nut graph.

When $n$ is not divisible by 3, it can be readily checked that the vector ${\bf v}_4$ composed by taking $n$ copies of the vector $\begin{pmatrix}-1 & 1\end{pmatrix}^T$ is a kernel eigenvector of ${\bf C}$ and thus $A_n$ is a nut graph.
\end{proof}

We remark that a general nullspace vector ${\bf x}$ of the adjacency matrix $\bf A$ of the antiprism $A_n$ can be obtained by considering ${\bf Ax}={\bf 0}$. We adopt a labelling of $A_n$ similar to that shown in  Figure \ref{Fig-AntiprismA9} for the case $n=9$. For any value of $n$, the vertices $1,2,3$ and $4$ are given nullspace vector weights $a,b,c$ and $d$, respectively. Then
\begin{align*}
{\bf x} = \left( \right. &a \hspace{15pt} b \hspace{15pt} c \hspace{15pt} d \hspace{15pt} (-a-b-d) \hspace{10pt} (a-c+d) \hspace{10pt} (-a-2d)\\
        &  (2a+b+2d)\hspace{10pt} (-2a+c-2d) \hspace{10pt} (2a+3d) \hspace{10pt} (-3a-b-3d)\\
        & (3a-c+3d) \hspace{10pt} (-3a-4d)\hspace{10pt} (4a+b+4d) \hspace{10pt} (-4a+c-4d)\\
        & \left. (4a+5d)\hspace{10pt} (-5a-b-5d) \hspace{10pt} (5a-c+5d) \hspace{10pt} \ldots ~\right)^T.
\end{align*}

\begin{figure}
\begin{center}
\begin{tikzpicture}[scale=2.7, every edge/.style = {draw, thick},
                    vertex/.style args = {#1 #2}{circle,
                                                draw, fill=red, inner sep=1.5pt,
                                                label=#1:#2}]
\path	node(v_1) [vertex=above 1] at (0,1) {}
	node(v_3) [vertex=left 3] at (-0.642788,0.766044) {}
	node(v_5) [vertex=left 5] at (-0.984808,0.173648) {}
	node(v_7) [vertex=left 7] at (-0.866025,-0.5) {}
	node(v_9) [vertex=below 9] at (-0.34202,-0.939693) {}
	node(v_11) [vertex=below 11] at (0.34202,-0.939693) {}
	node(v_13)  [vertex=right 13] at (0.866025,-0.5) {}
	node(v_15)  [vertex=right 15] at (0.984808,0.173648) {}
	node(v_17)  [vertex=right 17] at (0.642788,0.766044) {}
	node(v_2)  [vertex=below 2] at (-0.17101,0.469846) {}
	node(v_4)  [vertex=right 4] at (-0.433013,0.25) {}
	node(v_6)  [vertex=right 6] at (-0.492404,-0.0868241) {}
	node(v_8)  [vertex=right 8] at (-0.321394,-0.383022) {}
	node(v_10)  [vertex=below 10] at (0.,-0.5) {}
	node(v_12)  [vertex=left 12] at (0.321394,-0.383022) {}
	node(v_14)  [vertex=left 14] at (0.492404,-0.0868241) {}
	node(v_16)  [vertex=left 16] at (0.433013,0.25) {}
	node(v_18) [vertex=below 18]  at (0.17101,0.469846) {}
	(v_1) edge (v_2)
	(v_1) edge (v_3)
	(v_2) edge (v_3)
	(v_2) edge (v_4)
	(v_3) edge (v_4)	
	(v_3) edge (v_5)
	(v_4) edge (v_5)
	(v_4) edge (v_6)
	(v_5) edge (v_6)
	(v_5) edge (v_7)
	(v_6) edge (v_7)
	(v_6) edge (v_8)
	(v_7) edge (v_8)
	(v_7) edge (v_9)
	(v_8) edge (v_9)
	(v_8) edge (v_10)
	(v_9) edge (v_10)
	(v_9) edge (v_11)
	(v_10) edge (v_11)
	(v_10) edge (v_12)
	(v_11) edge (v_12)
	(v_11) edge (v_13)
	(v_12) edge (v_13)
	(v_12) edge (v_14)
	(v_13) edge (v_14)
	(v_13) edge (v_15)
	(v_14) edge (v_15)
	(v_14) edge (v_16)
	(v_15) edge (v_16)
	(v_15) edge (v_17)
	(v_16) edge (v_17)
	(v_16) edge (v_18)
	(v_17) edge (v_18)
	(v_17) edge (v_1)
	(v_18) edge (v_1)
	(v_18) edge (v_2);
\end{tikzpicture}
\end{center}
\vspace{-14pt} \caption{The antiprism $A_9$.} \label{Fig-AntiprismA9}
\end{figure}

For $n=3k,$ $k\in \mathbb{Z}^+$, ${\bf Ax}={\bf 0}$  yields
$${\bf Q}_{3k}\left(\begin{array}{c}
a\\ b\\ c\\ d
\end{array}\right)
=\left( \begin{array}{cccc}
  -2k&0&0&-2k\\
  0&0&0&0\\
    0&0&0&0\\
      -2k&0&0&-2k
  \end{array} \right)
\left(\begin{array}{c}
a\\ b\\ c\\ d
\end{array}\right)
= \left(\begin{array}{c}
0\\ 0\\ 0\\ 0
\end{array}\right).$$

Note that $a+d=0$. The rank of ${\bf Q}_{3k}$  is one and therefore there are three linearly independent nullspace vectors of the adjacency matrix given by the generalized kernel eigenvector
$\left(\begin{array}{ccccccccccccc}
a & b & c & -a & -b & -c & \ldots & a & b & c & -a & -b & -c
\end{array}\right)^T$ on 3 parameters.

For $n=3k+1,$ $k\in \mathbb{Z}^+$,   ${\bf Ax}={\bf 0}$  yields
$${\bf Q}_{3k+1}\left(\begin{array}{c}
a\\ b\\ c\\ d
\end{array}\right)
=\left( \begin{array}{cccc}
  -2k-1&0&1&-2k\\
  -1&-1&1&1\\
    -1&-2&-1&0\\
      -2k-1&-1&-1&-2k-1
  \end{array} \right)
\left(\begin{array}{c}
a\\ b\\ c\\ d
\end{array}\right)
= \left(\begin{array}{c}
0\\ 0\\ 0\\ 0
\end{array}\right).$$

For $n=3k+2,$   ${\bf Ax}={\bf 0}$  yields
$${\bf Q}_{3k+2}\left(\begin{array}{c}
a\\ b\\ c\\ d
\end{array}\right)
=\left( \begin{array}{cccc}
  -2k-2&-1&0&-2k-1\\
  -1&-2&-1&0\\
    0&-1&-2&-1\\
      -2k-1&0&-1&-2k-2
  \end{array} \right)
\left(\begin{array}{c}
a\\ b\\ c\\ d
\end{array}\right)
= \left(\begin{array}{c}
0\\ 0\\ 0\\ 0
\end{array}\right).$$

Row reduction of either of the matrices  ${\bf Q}_{3k+i}$  for $i \in \{1,2\}$ gives  that $a+d=0, b-d=0$  and $c+d=0$. In either case, the rank of ${\bf Q}_{3k+i}$  is three and therefore the dimension of the  nullspace of ${\bf A}$ is one. The nullspace vector is given by
$\begin{pmatrix}
a & -a & a & -a & a &  \ldots & a & -a
\end{pmatrix}^T$.

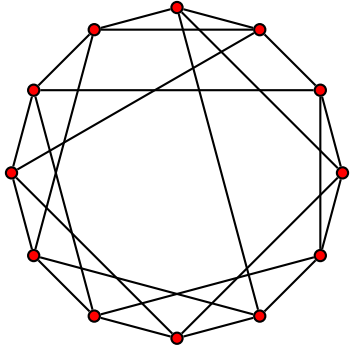
\begin{figure}[h]
\begin{center}
\begin{tikzpicture}[scale=2.2]
\tikzstyle{every path}=[draw, thick]
\tikzstyle{every node}=[draw, circle, fill=red, inner sep=1.5pt]
\node (v_0) at (0.00000, 1.00000) {};
\node (v_1) at (0.86603, -0.50000) {};
\node (v_2) at (0.00000, -1.00000) {};
\node (v_3) at (-0.86603, 0.50000) {};
\node (v_4) at (-0.86603, -0.50000) {};
\node (v_5) at (0.50000, 0.86603) {};
\node (v_6) at (1.00000, 0.00000) {};
\node (v_7) at (0.86603, 0.50000) {};
\node (v_8) at (-0.50000, -0.86603) {};
\node (v_9) at (0.50000, -0.86603) {};
\node (v_10) at (-1.00000, 0.00000) {};
\node (v_11) at (-0.50000, 0.86603) {};
\path (v_0) -- (v_5);
\path (v_0) -- (v_6);
\path (v_0) -- (v_9);
\path (v_0) -- (v_11);
\path (v_1) -- (v_6);
\path (v_1) -- (v_7);
\path (v_1) -- (v_8);
\path (v_1) -- (v_9);
\path (v_2) -- (v_6);
\path (v_2) -- (v_8);
\path (v_2) -- (v_9);
\path (v_2) -- (v_10);
\path (v_3) -- (v_7);
\path (v_3) -- (v_8);
\path (v_3) -- (v_10);
\path (v_3) -- (v_11);
\path (v_4) -- (v_8);
\path (v_4) -- (v_9);
\path (v_4) -- (v_10);
\path (v_4) -- (v_11);
\path (v_5) -- (v_7);
\path (v_5) -- (v_10);
\path (v_5) -- (v_11);
\path (v_6) -- (v_7);
\end{tikzpicture}
\end{center}
\caption{One of the 269 quartic nut graphs of order 12. \label{Fig-Quartic12}
}
\end{figure}

We shall now prove the second main theorem stated in the introduction, namely Theorem \ref{QuarticNut}.

\begin{figure}[h]
\begin{center}
\begin{tabular}{cc}
\begin{tikzpicture}[scale=2.2]
\tikzstyle{every path}=[draw, thick]
\tikzstyle{every node}=[draw, circle, fill=red, inner sep=1.5pt]
\node (v_0) at (0.91355, 0.40674) {};
\node (v_1) at (-0.97815, 0.20791) {};
\node (v_2) at (-0.10453, 0.99452) {};
\node (v_3) at (-0.80902, -0.58779) {};
\node (v_4) at (0.66913, -0.74314) {};
\node (v_5) at (-0.10453, -0.99452) {};
\node (v_6) at (1.00000, 0.00000) {};
\node (v_7) at (-0.50000, 0.86603) {};
\node (v_8) at (0.30902, -0.95106) {};
\node (v_9) at (0.30902, 0.95106) {};
\node (v_10) at (0.66913, 0.74314) {};
\node (v_11) at (-0.80902, 0.58779) {};
\node (v_12) at (-0.97815, -0.20791) {};
\node (v_13) at (0.91355, -0.40674) {};
\node (v_14) at (-0.50000, -0.86603) {};
\path (v_0) -- (v_6);
\path (v_0) -- (v_8);
\path (v_0) -- (v_10);
\path (v_0) -- (v_11);
\path (v_1) -- (v_6);
\path (v_1) -- (v_9);
\path (v_1) -- (v_11);
\path (v_1) -- (v_12);
\path (v_2) -- (v_7);
\path (v_2) -- (v_9);
\path (v_2) -- (v_11);
\path (v_2) -- (v_12);
\path (v_3) -- (v_7);
\path (v_3) -- (v_10);
\path (v_3) -- (v_12);
\path (v_3) -- (v_14);
\path (v_4) -- (v_8);
\path (v_4) -- (v_9);
\path (v_4) -- (v_13);
\path (v_4) -- (v_14);
\path (v_5) -- (v_8);
\path (v_5) -- (v_12);
\path (v_5) -- (v_13);
\path (v_5) -- (v_14);
\path (v_6) -- (v_10);
\path (v_6) -- (v_13);
\path (v_7) -- (v_11);
\path (v_7) -- (v_13);
\path (v_8) -- (v_14);
\path (v_9) -- (v_10);
\end{tikzpicture}

 &

\begin{tikzpicture}[scale=2.2]
\tikzstyle{every path}=[draw, thick]
\tikzstyle{every node}=[draw, circle, fill=red, inner sep=1.5pt]
\node (v_0) at (0.93247, -0.36124) {};
\node (v_1) at (-0.85022, 0.52643) {};
\node (v_2) at (-0.27366, 0.96183) {};
\node (v_3) at (-0.98297, -0.18375) {};
\node (v_4) at (0.73901, 0.67370) {};
\node (v_5) at (-0.60263, -0.79802) {};
\node (v_6) at (1.00000, 0.00000) {};
\node (v_7) at (-0.60263, 0.79802) {};
\node (v_8) at (0.09227, -0.99573) {};
\node (v_9) at (0.09227, 0.99573) {};
\node (v_10) at (0.44574, -0.89516) {};
\node (v_11) at (-0.98297, 0.18375) {};
\node (v_12) at (-0.85022, -0.52643) {};
\node (v_13) at (0.44574, 0.89516) {};
\node (v_14) at (0.93247, 0.36124) {};
\node (v_15) at (-0.27366, -0.96183) {};
\node (v_16) at (0.73901, -0.67370) {};
\path (v_0) -- (v_6);
\path (v_0) -- (v_10);
\path (v_0) -- (v_15);
\path (v_0) -- (v_16);
\path (v_1) -- (v_7);
\path (v_1) -- (v_8);
\path (v_1) -- (v_11);
\path (v_1) -- (v_12);
\path (v_2) -- (v_7);
\path (v_2) -- (v_9);
\path (v_2) -- (v_14);
\path (v_2) -- (v_15);
\path (v_3) -- (v_8);
\path (v_3) -- (v_11);
\path (v_3) -- (v_12);
\path (v_3) -- (v_14);
\path (v_4) -- (v_9);
\path (v_4) -- (v_10);
\path (v_4) -- (v_13);
\path (v_4) -- (v_14);
\path (v_5) -- (v_11);
\path (v_5) -- (v_12);
\path (v_5) -- (v_15);
\path (v_5) -- (v_16);
\path (v_6) -- (v_13);
\path (v_6) -- (v_14);
\path (v_6) -- (v_16);
\path (v_7) -- (v_12);
\path (v_7) -- (v_13);
\path (v_8) -- (v_10);
\path (v_8) -- (v_15);
\path (v_9) -- (v_11);
\path (v_9) -- (v_13);
\path (v_10) -- (v_16);
\end{tikzpicture}
 \\

\begin{tikzpicture}[scale=2.2]
\tikzstyle{every path}=[draw, thick]
\tikzstyle{every node}=[draw, circle, fill=red, inner sep=1.5pt]
\node (v_0) at (1. , 0.) {};
\node (v_7) at (0.945817 , 0.324699) {};
\node (v_1) at (0.789141 , 0.614213) {};
\node (v_16) at (0.546948 , 0.837166) {};
\node (v_9) at (0.245485 , 0.9694) {};
\node (v_18) at (-0.0825793 , 0.996584) {};
\node (v_6) at (-0.401695 , 0.915773) {};
\node (v_10) at (-0.677282 , 0.735724) {};
\node (v_5) at (-0.879474 , 0.475947) {};
\node (v_14) at (-0.986361 , 0.164595) {};
\node (v_11) at (-0.986361 , -0.164595) {};
\node (v_3) at (-0.879474 , -0.475947) {};
\node (v_15) at (-0.677282 , -0.735724) {};
\node (v_12) at (-0.401695 , -0.915773) {};
\node (v_4) at (-0.0825793 , -0.996584) {};
\node (v_13) at (0.245485 , -0.9694) {};
\node (v_8) at (0.546948 , -0.837166) {};
\node (v_2) at (0.789141 , -0.614213) {};
\node (v_17) at (0.945817 , -0.324699) {};
\path (v_0) -- (v_7);
\path (v_0) -- (v_8);
\path (v_0) -- (v_13);
\path (v_0) -- (v_17);
\path (v_1) -- (v_7);
\path (v_1) -- (v_11);
\path (v_1) -- (v_12);
\path (v_1) -- (v_16);
\path (v_2) -- (v_8);
\path (v_2) -- (v_12);
\path (v_2) -- (v_14);
\path (v_2) -- (v_17);
\path (v_3) -- (v_9);
\path (v_3) -- (v_11);
\path (v_3) -- (v_15);
\path (v_3) -- (v_18);
\path (v_4) -- (v_9);
\path (v_4) -- (v_12);
\path (v_4) -- (v_13);
\path (v_4) -- (v_15);
\path (v_5) -- (v_10);
\path (v_5) -- (v_14);
\path (v_5) -- (v_15);
\path (v_5) -- (v_16);
\path (v_6) -- (v_10);
\path (v_6) -- (v_16);
\path (v_6) -- (v_17);
\path (v_6) -- (v_18);
\path (v_7) -- (v_11);
\path (v_7) -- (v_13);
\path (v_8) -- (v_13);
\path (v_8) -- (v_14);
\path (v_9) -- (v_16);
\path (v_9) -- (v_18);
\path (v_10) -- (v_17);
\path (v_10) -- (v_18);
\path (v_11) -- (v_14);
\path (v_12) -- (v_15);
\end{tikzpicture}

&

\begin{tikzpicture}[scale=2.2]
\tikzstyle{every path}=[draw, thick]
\tikzstyle{every node}=[draw, circle, fill=red, inner sep=1.5pt]
\node (v_13) at (1. , 0.) {};
\node (v_1) at (0.955573 , 0.294755) {};
\node (v_9) at (0.826239 , 0.56332) {};
\node (v_17) at (0.62349 , 0.781831) {};
\node (v_12) at (0.365341 , 0.930874) {};
\node (v_6) at (0.0747301 , 0.997204) {};
\node (v_20) at (-0.222521 , 0.974928) {};
\node (v_5) at (-0.5 , 0.866025) {};
\node (v_18) at (-0.733052 , 0.680173) {};
\node (v_10) at (-0.900969 , 0.433884) {};
\node (v_2) at (-0.988831 , 0.149042) {};
\node (v_15) at (-0.988831 , -0.149042) {};
\node (v_3) at (-0.900969 , -0.433884) {};
\node (v_16) at (-0.733052 , -0.680173) {};
\node (v_8) at (-0.5 , -0.866025) {};
\node (v_14) at (-0.222521 , -0.974928) {};
\node (v_7) at (0.0747301 , -0.997204) {};
\node (v_19) at (0.365341 , -0.930874) {};
\node (v_4) at (0.62349 , -0.781831) {};
\node (v_11) at (0.826239 , -0.56332) {};
\node (v_0) at (0.955573 , -0.294755) {};
\path (v_0) -- (v_9);
\path (v_0) -- (v_11);
\path (v_0) -- (v_13);
\path (v_0) -- (v_14);
\path (v_1) -- (v_9);
\path (v_1) -- (v_13);
\path (v_1) -- (v_18);
\path (v_1) -- (v_20);
\path (v_2) -- (v_10);
\path (v_2) -- (v_12);
\path (v_2) -- (v_14);
\path (v_2) -- (v_15);
\path (v_3) -- (v_10);
\path (v_3) -- (v_13);
\path (v_3) -- (v_15);
\path (v_3) -- (v_16);
\path (v_4) -- (v_11);
\path (v_4) -- (v_12);
\path (v_4) -- (v_19);
\path (v_4) -- (v_20);
\path (v_5) -- (v_11);
\path (v_5) -- (v_17);
\path (v_5) -- (v_18);
\path (v_5) -- (v_20);
\path (v_6) -- (v_12);
\path (v_6) -- (v_17);
\path (v_6) -- (v_19);
\path (v_6) -- (v_20);
\path (v_7) -- (v_14);
\path (v_7) -- (v_15);
\path (v_7) -- (v_16);
\path (v_7) -- (v_19);
\path (v_8) -- (v_14);
\path (v_8) -- (v_16);
\path (v_8) -- (v_18);
\path (v_8) -- (v_19);
\path (v_9) -- (v_16);
\path (v_9) -- (v_17);
\path (v_10) -- (v_15);
\path (v_10) -- (v_18);
\path (v_11) -- (v_13);
\path (v_12) -- (v_17);
\end{tikzpicture}
\end{tabular}
\end{center}
\caption{Quartic nut graphs of order 15, 17, 19 and 21.} \label{Fig-QuarticODD}
\end{figure}
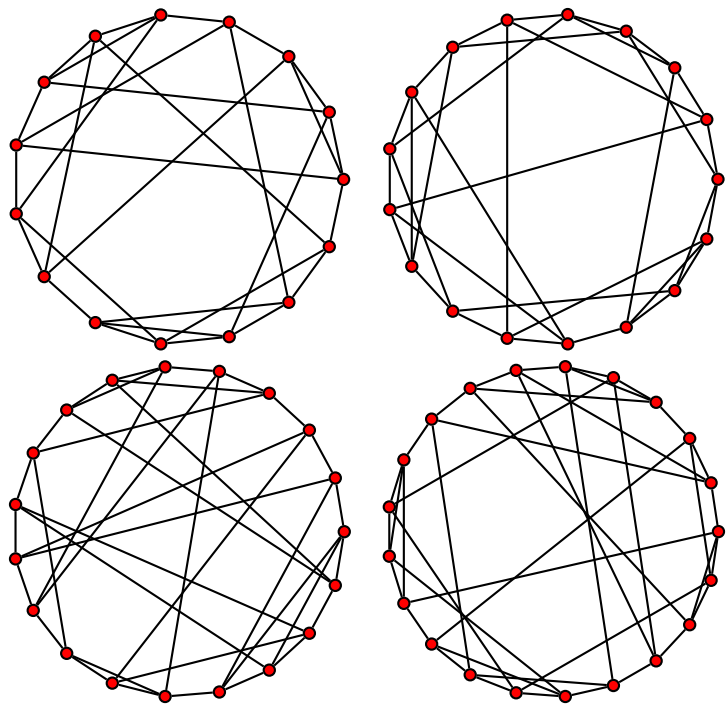

\begin{proof} (Proof of  Theorem \ref{QuarticNut})
Starting from the antiprism graphs $A_4, A_5$ and $A_7$ implies that there exist quartic nut graphs on 8, 10 and 14 vertices, respectively. Computer search has revealed 269 quartic nut graphs on 12 vertices, one of which is shown in  Figure \ref{Fig-Quartic12}. Using the graphs of order $8,10,12$ and $14$ as seeds for repeated Fowler constructions takes care of all even orders at least eight.

For the odd case, an exhaustive computer search has shown that no quartic nut graphs of odd order at most 13 exist and there is only one quartic nut graph of order 15 (shown in Figure \ref{Fig-QuarticODD}). Quartic nut graphs of order $17, 19$ and $21$ are also shown in Figure \ref{Fig-QuarticODD}. Using these orders as seeds for repeated Fowler constructions proves that quartic nut graphs of odd order at least 13 exist.
\end{proof}


\section{An open problem}

In this paper we have determined the values of $n$ for which there exist $\rho$-regular nut graphs of order $n$ for $\rho=3,4$. These results lead to a very natural question, namely:

\begin{problem}
For each degree $\rho$, determine the set $N(\rho)$ such that there exists a $\rho$-regular nut graph of order $n$ if and only if $n \in N(\rho)$.
\end{problem}


\section*{Acknowledgements}

The authors would like to thank Nino Ba\v{s}i\'{c} for conducting the computer searches, Xandru Mifsud for programs yielding the kernel eigenvectors of $A_n$, and Jan Goedgebeur for drawing our attention to an inaccuracy in the number of cubic nut graphs. The second author would like to acknowledge support of the ARRS (Slovenia) grants P1-0294, J1-9187 and N1-0032, and thank the Department of Mathematics of the University of Malta for the warm welcome during his visit that made this research possible.



\end{document}